\def\NAT@def@citea{\def\@citea{\NAT@separator}}
\theoremstyle{plain}
\newtheorem{theorem}{Theorem}[section]
\newtheorem{lemma}[theorem]{Lemma}
\newtheorem{corollary}[theorem]{Corollary}
\newtheorem{proposition}[theorem]{Proposition}
\theoremstyle{definition}
\newtheorem{definition}[theorem]{Definition}
\theoremstyle{remark}
\newtheorem{remark}{Remark}
\newcommand{\R}{\mathbb R}
\newcommand{\N}{\mathbb N}
\newcommand{\Z}{\mathbb Z}
\newcommand{\X}{\mathbb X}
\newcommand{\Y}{\mathbb Y}
\newcommand{\Uball}{{\mathbb B}}
\newcommand{\dom}{{\rm dom}\, }
\newcommand{\nullv}{\mathbf{0}}
\newcommand{\conv}{{\rm conv}\, }
\newcommand{\cl}{{\rm cl}\, }
\newcommand{\wcl}{{\rm cl}^*\, }
\newcommand{\inte}{{\rm int}\, }
\newcommand{\rinte}{{\rm ri}\, }
\newcommand{\weakstar}{weak${}^*\, $ }
\newcommand{\minv}{{\rm Min}_K}
\newcommand{\Lin}{\mathcal{L}}
\newcommand{\SVI}{(\mathcal{SVI})}
\newcommand{\VOPo}{({\mathcal P}_\omega)}
\newcommand{\VOP}{({\mathcal P})}
\newcommand{\VPl}{({\mathcal P}_\ell)}
\newcommand{\VOPG}{({\mathcal P}_G)}
\newcommand{\VOPGl}{({\mathcal P}_{G,\ell})}
\newcommand{\Solv}{{\mathcal R}}
\newcommand{\CQ}{(\mathcal{CQ})}
\newcommand{\parord}{\le_{{}_K}}
\newcommand{\merit}{\nu_{G,C}}
\newcommand{\Fsub}{\widehat{\partial}}
\newcommand{\Fusub}{\widehat{\partial}^+}
\newcommand{\epiK}[1]{{\rm epi}_K(#1)}
\newcommand{\stsl}[1]{|\nabla #1|}
\newcommand{\stslS}[1]{|\nabla_S #1|}
\newcommand{\dcone}[1]{{#1}^{{}^\ominus}}
\newcommand{\ndc}[1]{{#1}^{{}^\ominus}}
\newcommand{\pdc}[1]{{#1}^{{}^\oplus}}
\newcommand{\ball}[2]{{\rm B}(#1, #2)}
\newcommand{\dist}[2]{{\rm dist}\left(#1,#2\right)}
\newcommand{\exc}[2]{{\rm exc}(#1,#2)}
\newcommand{\Tang}[2]{{\rm T}(#1;#2)}  
\newcommand{\Iang}[2]{{\rm I}(#1;#2)} 
\newcommand{\WIang}[2]{{\rm I_w}(#1;#2)} 
\newcommand{\Ncone}[2]{{\rm N}(#1;#2)}
\newcommand{\haus}[2]{{\rm haus}(#1,#2)}
\newcommand{\Fder}[2]{{\rm D}#1(#2)}
\newcommand{\Preder}[3]{H_{#1}(#2;#3)}
\newcommand{\incr}[3]{{\rm inc}_C(#1;#2;#3)}
\newcommand{\Bder}[3]{{\rm D}_{B}#1(#2;#3)}
\newcommand{\dder}[3]{#1'(#2;#3)}
\begin{document}


\title{On some efficiency conditions for vector optimization problems
with uncertain cone constraints: a robust approach}

\author{
\name{A. Uderzo\textsuperscript{a}\thanks{CONTACT A. Uderzo. Email: amos.uderzo@unimib.it}}
\affil{\textsuperscript{a} Dipartimento di Matematica e Applicazioni, Universit\`a
di Milano-Bicocca, Milano, Italy}
}

\maketitle

\begin{abstract}
In the present paper, several types of efficiency conditions are established
for vector optimization problems with cone constraints affected by uncertainty,
but with no information of stochastic nature about the uncertain data.
Following a robust optimization approach, data uncertainty is faced by handling
set-valued inclusion problems. The employment of recent results about error
bounds and tangential approximations of the solution set to the latter
enables one to achieve necessary conditions for weak efficiency via a penalization
method as well as via the modern revisitation of the Euler-Lagrange method, with or without
generalized convexity assumptions. The presented conditions are formulated in terms of
various nonsmooth analysis constructions, expressing first-order approximations
of mappings and sets, while the metric increase property plays the role of a
constraint qualification.
\end{abstract}

\begin{keywords}
Vector optimization problem; data uncertainty; robust approach; weak efficiency condition;
generalized derivative; generalized convexity.
\end{keywords}

\section{Introduction}

Consider a vector optimization problem
\begin{equation*}
  \minv f(x) \ \hbox{\ subject to $x\in\Solv$},
  \leqno \VOP
\end{equation*}
where $\Solv\subseteq\X$ is a decision set defining
the feasible region of the problem, $f:\X\longrightarrow\Y$
represents the criterion with respect to which decisions in $\Solv$ are to be optimized,
and $K\subseteq\Y$ is a convex cone defining the partial order, according
to which the outcomes of decisions are compared in the criteria space.
Throughout the paper $(\X,\|\cdot\|)$ and $(\Y,\|\cdot\|)$ denote
real Banach spaces and it will be assumed that
$\inte K\ne\varnothing$.
For vector optimization problems the concept of solution is not
uniquely defined but several notions, reflecting different aspects
of the issue, can be considered. Among the others, the notions of
efficient and weakly efficient solution are well recognized and largely
investigated in the literature devoted to vector optimization
(see \cite{DauSta86,Jahn04,KhTaZa15,Luc89,SaNaTa85}).
Recall that
an element $\bar x\in\Solv$ is said to be a locally weakly efficient
(for short, w-eff.) solution to $\VOP$ if there exists $\delta>0$
such that
$$
  f(\Solv\cap\ball{\bar x}{\delta})\cap[f(\bar x)-\inte K]=\varnothing;
$$
an element $\bar x\in\Solv$ is said to be a locally efficient
(for short, eff.) solution to $\VOP$ if there exists $\delta>0$
such that
$$
  f(\Solv\cap\ball{\bar x}{\delta})\cap[f(\bar x)-K]=\{f(\bar x)\}.
$$
Clearly, any local eff. solution to $\VOP$ is also a local w-eff. one.
The present paper deals with conditions of local weak efficiency for
vector optimization problems, whose decision set $\Solv$ is
formalized by uncertain cone constraints, namely problems of the
form
\begin{equation*}
  \minv f(x) \ \hbox{\ with $x\in S$ subject to }\ g(\omega,x)\in C,
  \leqno \VOPo
\end{equation*}
where $C\subseteq\Z$ is a (proper) closed, convex cone in a real Banach space
$(\Z,\|\cdot\|)$, with $C\ne{\nullv}$, $S\subseteq\X$ is a closed set expressing a geometric
constraint free from uncertainty, and $g:\Omega\times\X\longrightarrow\Z$
is a given mapping. Here $\Omega$ represents a given uncertainty set,
which allows one to describe a decision environment characterized
by a crude knowledge of the data. This means that the constraining mapping
$g$, as a structural element of the problem, is affected by uncertainty,
but this uncertainty can not be tackled by handling probability distributions
as in stochastic optimization, because such an information is not at disposal.
The only information about the data element $\omega$ is that $\omega\in\Omega$.
The paper often credited as a first reference in undertaking an aware and systematic study
of optimization problems, whose data are
affected by this form of uncertainty, is \cite{BenNem98}
\footnote{To be more precise, in \cite{BenNem98} the authors do indicate
as a forerunner of their approach A.L. Soyster, who in \cite{Soys73}
introduced a similar point of view in dealing with uncertainly
constrained problems in mathematical programming.}.
There, reasons
for such a crude knowledge of the data are widely discussed.
In this circumstance, situations quite common in reality may require
that the cone constraint $g(\omega,x)\in C$ is satisfied,
whatever the actual realization of $\omega\in\Omega$ is. In other
terms, the decision maker is forced to regard as feasible only
those elements of $S$ such that $g(\omega,x)\in C$ for every
$\omega\in\Omega$. Examples of such situations, emerging especially
in engineering applications, are described in \cite{BenNem98}.
On this basis the authors developed an approach hedging the decision
maker against the worst cases that may occur, called robust approach
to uncertain optimization, in analogy with robust control.
This `pessimistic' (or `ultraconservative', in the Soyster's words)
approach to uncertainty opened a flourishing line of research,
in scalar as well as in vector optimization, known as robust
optimization (see \cite{BenNem98,BenNem02,BeGhNe09} and references therein).

In the case of vector optimization problems such as $\VOPo$, where
the objective function is not affected by uncertainty,
this approach reduces to consider as a feasible region
the set
$$
   \Solv=\{x\in S:\ g(\omega,x)\in C,\ \forall\omega\in C\}.
$$
Thus, by introducing the set-valued mapping $G:\X\rightrightarrows\Z$,
defined as being
\begin{equation}
  G(x)=g(\Omega,x)=\{z=g(\omega,x)\in\Z:\ \omega\in\Omega\},
\end{equation}
the robust counterpart of the feasible region of $\VOPo$ leads naturally
to consider the so-called set-valued inclusion problem: given a (nonempty)
closed set $S\subseteq\X$, a proper, closed and convex cone $C\subseteq\Z$
and a set-valued mapping
 $G:\X\rightrightarrows\Z$
\begin{equation*}
  \hbox{ find $x\in S$  such that }\ G(x)\subseteq C.
  \leqno \SVI
\end{equation*}
In fact, recalling that the upper inverse image of $C$ through the set-valued
mapping $G$ is the set $G^{+1}(C)=\{x\in\X:\ G(x)\subseteq C\}$, one has
$$
 \Solv=S\cap G^{+1}(C).
$$
To the best of the author's knowledge, problem $\SVI$ began to be investigated
independently of robust optimization in \cite{Cast99}, which focuses on error
bound estimates. Solvability and solution stability issues for $\SVI$ have
been studied more recently in \cite{Uder19,Uder21}. In the light of the
role played by $\SVI$ in the robust approach to optimization problems with
uncertain constraints, it seems to be natural to assess an impact evaluation
of the recent achievements about the solution set to $\SVI$ and its
approximations within the theory of optimality/efficiency conditions.
Some initial results
along this line of research have been obtained in the case of scalar optimization
in \cite{Uder19,Uder22}. So, the present analysis can be regarded as
a development of ideas and techniques, presented especially in \cite{Uder22},
towards the specific context of vector optimization, in considering
problems of the form
\begin{equation*}
  \minv f(x) \ \hbox{\ with $x\in S$ subject to }\ G(x)\subseteq C.
  \leqno \VOPG
\end{equation*}
This analysis will be performed here by well-known techniques:
in fact, some first-order efficiency conditions are obtained
by means of the Clarke penalization principle, through its vector
counterpart due to J.J. Ye (see \cite{Ye12}). Some other first-order efficiency
conditions are achieved by exploiting tangential approximations of the solution
set to $\SVI$, following a modern revisitation of the celebrated
Euler-Lagrange method.
In both the cases, the main tools employed come from nonsmooth and
variational analysis as well as from generalized convexity.

Optimality conditions for vector  optimization problems with uncertain
constraints are a subject intensively investigated in the last years,
in particular through the robust approach (see, among others,
\cite{ChKoYa19,Chuo16,KurLee12} and references therein).
A feature distinguishing the analysis here proposed is the
great generality kept on $\Omega$, in the very spirit of robust
optimization, owing to the introduction of the set-valued mapping $G$.

The presentation of the contents is organized according to the
following arrangement. Section \ref{Sect:2} collects some basic technical
preliminaries of large employment in optimization and related fields.
Some more specific constructions needed in the subsequent analysis
will be recalled contextually to their use.
In Section \ref{Sect:3} first-order necessary conditions for the
local weak efficiency of solutions to $\VOPG$ are established via a
penalization method, with and without generalized convexity assumptions.
In Section \ref{Sect:4} different Lagrangian-type necessary conditions for
local weak efficiency are formulated in terms outer prederivatives
of $G$, with or without smoothness assumption on $f$.

The notations in use throughout the paper are mainly standard.
Quite often, capital letters in bold will denote real Banach spaces.
The null vector in a Banach space is denoted by $\nullv$.
In a metric space setting, the closed ball centered at an element $x$,
with radius $r\ge 0$, is indicated with $\ball{x}{r}$. In particular,
in a Banach space, $\Uball=\ball{\nullv}{1}$.
Whenever $A$ is a subset of a metric space, $\ball{A}{r}$ indicates
the $r$-enlargement of $A$, whereas the distance of a point $x$ from
$A$ is denoted by $\dist{x}{A}$.
If $W$ is a subset of the same metric space,
$\exc{A}{W}=\sup_{a\in A}\dist{a}{W}$ indicates the excess of $A$
over $W$.
Symbols $\cl A$ and $\inte A$ denote the topological closure and the interior
of $A$, respectively.
If $A$ is a subset of a Banach space, its convex hull is denoted
by $\conv A$ and, when $A$ is convex,
its relative interior is denoted by $\rinte A$.

By $\Lin(\X,\Y)$ the Banach space of all bounded linear operators
acting between $\X$ and $\Y$ is denoted, equipped with the operator norm
$\|\cdot\|_\Lin$. In particular, $\X^*=\Lin(\X,\R)$ stands for the dual
space of $\X$, in which case $\|\cdot\|_\Lin$ is simply marked by $\|\cdot\|$.
The null vector of a dual space will be marked by $\nullv^*$.
The duality pairing a Banach space with its dual will be denoted
by $\langle\cdot,\cdot\rangle$.
Given a function $\varphi:\X\longrightarrow\R\cup\{\mp\infty\}$,
by $[\varphi\le 0]=\varphi^{-1}([-\infty,0])$ its sublevel set is
denoted, whereas $[\varphi>0]=\varphi^{-1}((0,+\infty])$ denotes
the strict superlevel set of $\varphi$.
The acronyms l.s.c., u.s.c. and p.h. stand for lower semicontinuous, upper
semicontinuous and positively homogeneous, respectively.
The symbol $\dom\varphi=\varphi^{-1}(\R)$ indicates the domain of $\varphi$,
whenever $\varphi$ is a functional, whereas if $F:\X\rightrightarrows\Y$ is a set-valued
mapping, $\dom F=\{x\in\X:\ F(x)\ne\varnothing\}$.


\section{Basic tools of analysis}    \label{Sect:2}

Let $A\subseteq\X$ be a nonempty closed subset of a Banach space
and let $\bar x\in A$. Nonsmooth analysis provides a large variety
of concepts for the local, first-order conic approximation of $A$ near
$\bar x$. For the purposes of the present analysis, the following ones
are to be mentioned:
$$
  \Tang{A}{\bar x}=\{v\in\X:\ \exists (v_n)_n \hbox{ with } v_n\to v,
   \ \exists (t_n)_n \hbox { with }t_n\downarrow 0:\
     \bar x+t_nv_n\in A,\ \forall n\in\N\},
$$
$$
  \Iang{A}{\bar x}=\{v\in\X:\ \exists\delta>0: \bar x+tv\in A,\
  \forall t\in (0,\delta)\},
$$
and
$$
  \WIang{A}{\bar x}=\{v\in\X:\ \forall\epsilon>0,\ \exists
  t_\epsilon\in (0,\epsilon):\ \bar x+t_\epsilon v\in A\},
$$
called the contingent (or Bouligand tangent) cone, the feasible direction
cone and the weak feasible direction cone to $A$ at $\bar x$,
respectively. They are known to be linked by the inclusion relation
of general validity
$$
   \Iang{A}{\bar x}\subseteq\WIang{A}{\bar x}\subseteq\Tang{A}{\bar x},
$$
where strict inclusion may hold (see \cite{Schi07}). Whenever $A$ is locally
convex around $\bar x$, i.e. there exists $r>0$ such that $A\cap\ball{\bar x}{r}$
is a convex set, the above inclusion relation collapses to
$$
   \cl\Iang{A}{\bar x}=\cl\WIang{A}{\bar x}=\Tang{A}{\bar x}
$$
(see \cite[Proposition 11.1.2(d)]{Schi07}). In such an event, $\Tang{A}{\bar x}$
is a closed convex cone, while $\Iang{A}{\bar x}$ is a convex cone.

Let $Q\subseteq\Y$ be a cone. The sets
$$
   \pdc{Q}=\{y^*\in\Y^*:\ \langle y^*,y\rangle\ge 0,\quad\forall y\in Q\}
   \quad\hbox{ and }\quad \ndc{Q}=-\pdc{Q}
$$
are called the positive and the negative dual cone of $Q$, respectively.

\begin{remark}    \label{rem:dconepro}
(i) Note that, whenever a set $A$ is locally convex around $\bar x$ (so
$\Tang{A}{\bar x}$ is convex) the negative dual cone operator
allows one to represent
the normal cone to $A$ in the sense of convex analysis
at some element $\bar x\in A$ in terms of contingent cone
as follows
$$
  \Ncone{A}{\bar x}=\{x^*\in\X^*:\ \langle x^*,x-\bar x\rangle
  \le 0,\quad\forall x\in A\}=\ndc{\Tang{A}{\bar x}}.
$$

(ii) The interaction of the negative dual cone operator with
some set operations is described by the following formula:
given $\Lambda\in\Lin(\X,\Y)$ and two closed convex cones
$Q\subseteq\Y$ and $P\subseteq\X$, it holds
$$
  \ndc{(P\cap\Lambda^{-1}(Q))}=\cl(\ndc{P}+\Lambda^*(\ndc{Q})),
$$
where $\Lambda^*\in\Lin(\Y^*,\X^*)$ denotes the adjoint operator
to $\Lambda$ (see \cite[Lemma 2.4.1]{Schi07}).
From this formula one can derive, as a special case, the equality
\begin{equation}   \label{eq:dconeadj}
   \ndc{\left[\Lambda^{-1}(Q)\right]}=\cl\Lambda^*(\ndc{Q}),
\end{equation}
and, under the qualification condition $\inte P_1\cap\inte P_2
\ne\varnothing$,
\begin{equation}
   \ndc{(P_1\cap P_2)}=\ndc{P_1}+\ndc{P_2},
\end{equation}
with $P_1$ and $P_2$ being closed convex cones in $\X$
(see \cite[Table 4.3 (5)b)]{AubFra90}. Note that in the equality
$(\ref{eq:dconeadj})$ the closure operation can be omitted
if $\Lambda^{-1}(\rinte Q)\ne\varnothing$. Such a condition is
evidently satisfied if $\Lambda\X\supseteq Q$ and $Q\ne\{\nullv\}$
(see, for instance, \cite[Corollary 16.3.2]{Rock70}).
\end{remark}

Let $K\subseteq\Y$ be a (proper) convex cone inducing a
partial order $\parord$ on $\Y$ and let $f:\X\longrightarrow\Y$
be a mapping between Banach spaces.
Then $f$ is said to be $K$-convex on the convex set $A\subseteq\X$
if the set
$$
  \epiK{f}=\{(x,y)\in\X\times\Y:\ x\in A,\ f(x)\parord y\}
$$
is convex. If, in addition, $A$ is a cone and $f$ is also positively homogeneous,
then $f$ is said to be $K$-sublinear on $A$. It is well known that if $f$
is $K$-convex on $A$, then $f(A)+K$ is convex, while if $A$ is a cone and
$f$ is $K$-sublinear, then $f(A)+K$ is a convex cone.

Following \cite[Definition 2.3]{FreKas99}, a mapping $f$ is said to be
$K$-convexlike on a set (not necessarily convex) $A$ if the set $f(A)+K$ is a convex.

\begin{remark}    \label{rem:Ksublin}
In Section \ref{Sect:3} it will be used the fact, which is readily proved by
handling the related definitions, that if $\nu:\X\longrightarrow\R$ is a
sublinear function on $\X$ and $e\in K$, then the mapping $\nu e:\X\longrightarrow\Y$,
defined by $x\mapsto \nu(x)e$ is $K$-sublinear on $\X$.
\end{remark}

Generalized convexity notions apply also to set-valued mappings. Following
\cite{Cast99}, a set-valued mapping $F:\X\rightrightarrows\Z$ between Banach spaces
is said to be $C$-concave on $\X$, where $C\subseteq\Z$ is a (proper) convex cone, if
$$
  F(tx_1+(1-t)x_2)\subseteq tF(x_1)+(1-t)F(x_2)+C,\quad\forall
  x_1,\, x_2\in\X.
$$
Some examples of $C$-concave set-valued mappings of interest in optimization
can be found in \cite{Uder21}. For the purposes of the present analysis, the
special class of $C$-concave set-valued mappings known as fans is to be
mentioned. Recall that, after \cite{Ioff81}, a set-valued mapping $H:\X
\rightrightarrows\Z$ is called fan if it fulfils all the following conditions:

(i) it is p.h.;

(ii) $\nullv\in H(\nullv)$;

(iii) it is convex-valued;

(iv) $H(x_1+x_2)\subseteq H(x_1)+H(x_2),\quad\forall x_1,\, x_2\in\X$.

\noindent Fans may appear in a variety of forms. In Section \ref{Sect:4},
only fans which are generated by bundles of linear mappings will be
actually employed, i.e. fans $H_\mathcal{G}:\X\rightrightarrows\Z$ that
can be represented as
$$
  H_\mathcal{G}(x)=\{\Lambda x:\ \Lambda\in\mathcal{G}\},
$$
where $\mathcal{G}\subseteq\Lin(\X,\Z)$ is a (nonempty) convex and
weakly closed set.

\begin{remark}    \label{rem:Linfan}
Whenever a fan $H_\mathcal{G}$ is generated by a bounded set $\mathcal{G}$,
it turns out to be a Lipschitz set-valued mapping, i.e. it holds
$$
   \haus{H_\mathcal{G}(x_1)}{H_\mathcal{G}(x_2)}\le l\|x_1-x_2\|,
   \quad\forall x_1,\, x_2\in\X,
$$
with $l\ge\sup\{\|\Lambda\|_\Lin:\ \Lambda\in\mathcal{G}\}$, where
$\haus{A}{W}=\max\{\exc{A}{W},\exc{W}{A}\}$ denotes the Hausdorff
distance between two sets $A$ and $W$
(see \cite[Remark 2.14(iii)]{Uder22}).
\end{remark}


\section{Weak efficiency conditions via penalization}   \label{Sect:3}

\begin{definition}[$K$-Lipschitz continuity]
Let $f:\X\longrightarrow\Y$ be a mapping between normed spaces
and let $K\subseteq\Y$ be a convex cone, with $\inte K\ne\varnothing$.
$f$ is said to be {\it $K$-Lipschitz} on the set $D\subseteq\X$ if
there exist a constant $\ell_f>0$ and a vector $e\in\inte K\cap\Uball$
such that
$$
  f(x_1)\in f(x_2)-\ell_f\|x_1-x_2\|e+K,\quad\forall
  x_1,\, x_2\in D.
$$
If $\bar x\in\X$ and $f$ is $K$-Lipschitz on a set $D=\ball{\bar x}{\delta}$
for some $\delta>0$, then $f$ is said to be $K$-Lipschitz near $\bar x$.
\end{definition}

The above notion has been used in \cite{Ye12} as a key concept
to extend the Clarke penalization principle from the scalar case
to vector optimization problems. This is done here directly through
a local error bound function, whose definition is recalled below.

\begin{definition}[Local error bound function]    \label{def:locerbofun}
Let $\bar x\in\Solv\subseteq S\subseteq\X$. A function $\psi:\X
\longrightarrow [0,+\infty]$ is said to be a {\it local error
bound function} for $\Solv$ near $\bar x$ if there exists $\delta>0$
such that both the following conditions are satisfied:

(i) $\dist{x}{\Solv}\le\psi(x),\quad\forall x\in
\ball{\bar x}{\delta}\cap S$;

(ii)  $\dist{x}{\Solv}=\psi(x),\quad\forall x\in\Solv$.
\end{definition}

\begin{proposition}(\cite[Theorem 4.2(i)]{Ye12})    \label{pro:thm42(i)Ye}
With reference to a problem $\VOP$, let $\bar x\in\Solv$ and suppose
that:

(i) $f$ is $K$-Lipschitz near $\bar x$, with constant $\ell_f$
and vector $e\in\inte K$;

(ii) $\psi:\X\longrightarrow [0,+\infty]$ is an error bound
function near $\bar x$.

\noindent Then, for any $\ell\ge \ell_f$, every local $w$-eff.
solution to $\VOP$ is also a local w-eff. solution of
the problem
$$
  \minv [f(x)+\ell\psi(x)e].   \leqno \VPl
$$
Furthermore, if $\Solv$ is closed,
for any $\ell>\ell_f$, every local eff.
solution to $\VOP$ is also a local eff. solution of
the problem $\VPl$.
\end{proposition}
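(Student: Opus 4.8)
The plan is to argue by contradiction, using the $K$-Lipschitz estimate to transport a failure of penalized weak efficiency at a nearby near-feasible point to a genuine efficiency violation at a point of $\Solv$ reached by near-projection. First I would record three localization radii: a radius $\delta_1$ on which $\bar x$ witnesses weak efficiency (resp.\ efficiency) for $\VOP$, a radius $\delta_2$ on which the $K$-Lipschitz inequality holds, and a radius $\delta_3$ on which conditions (i)--(ii) of Definition \ref{def:locerbofun} hold; then I would set $\delta$ equal to one third of $\min\{\delta_1,\delta_2,\delta_3\}$ and work with $x\in\ball{\bar x}{\delta}\cap S$ (the geometric constraint $x\in S$ being retained in $\VPl$, which is exactly where the error bound is available). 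Since $\bar x\in\Solv$, condition (ii) gives $\psi(\bar x)=\dist{\bar x}{\Solv}=0$, so the penalized objective $F:=f+\ell\psi e$ satisfies $F(\bar x)=f(\bar x)$; points with $\psi(x)=+\infty$ are harmless, as there $F(x)$ does not lie in $\Y$ and the target region $f(\bar x)-\inte K$ is never met, so I may assume $\psi(x)<+\infty$.

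For the first assertion, suppose some $x\in\ball{\bar x}{\delta}\cap S$ violated weak efficiency of $\bar x$ for $\VPl$, i.e.\ $F(x)\in F(\bar x)-\inte K=f(\bar x)-\inte K$; equivalently $w:=f(\bar x)-f(x)-\ell\psi(x)e\in\inte K$. For each small $\epsilon>0$ I would choose $x_\epsilon\in\Solv$ with $\|x-x_\epsilon\|<\dist{x}{\Solv}+\epsilon$; since $\dist{x}{\Solv}\le\|x-\bar x\|\le\delta$, the choice of $\delta$ (for $\epsilon$ small) keeps $x_\epsilon\in\ball{\bar x}{\delta_1}\cap\ball{\bar x}{\delta_2}$. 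Applying the $K$-Lipschitz inequality to the pair $(x,x_\epsilon)$ yields $f(x_\epsilon)=f(x)+\ell_f\|x-x_\epsilon\|e-k_\epsilon$ for some $k_\epsilon\in K$, whence
\[ f(\bar x)-f(x_\epsilon)=w+\beta_\epsilon e+k_\epsilon,\qquad \beta_\epsilon:=\ell\psi(x)-\ell_f\|x-x_\epsilon\|. \]
Using condition (i), $\dist{x}{\Solv}\le\psi(x)$, together with $\ell\ge\ell_f$, I get $\beta_\epsilon>-\ell_f\epsilon$. If $\beta_\epsilon\ge0$ then $\beta_\epsilon e\in K$ and the right-hand side lies in $\inte K+K=\inte K$, so $f(x_\epsilon)\in f(\bar x)-\inte K$ with $x_\epsilon\in\Solv\cap\ball{\bar x}{\delta_1}$, contradicting weak efficiency of $\bar x$ for $\VOP$.

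The step I expect to be delicate is precisely the borderline regime $\ell=\ell_f$ with the infimum $\dist{x}{\Solv}$ not attained, where $\beta_\epsilon$ may remain strictly negative for every $\epsilon$. Here I would exploit that $w\in\inte K$ is \emph{fixed}, independent of $\epsilon$: openness of $\inte K$ furnishes $\rho>0$ with $\ball{w}{\rho}\subseteq\inte K$, while $\|\beta_\epsilon e\|\le|\beta_\epsilon|<\ell_f\epsilon$ because $e\in\Uball$. Choosing $\epsilon<\rho/\ell_f$ forces $w+\beta_\epsilon e\in\ball{w}{\rho}\subseteq\inte K$, so again $f(\bar x)-f(x_\epsilon)=(w+\beta_\epsilon e)+k_\epsilon\in\inte K$ and the same contradiction ensues. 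This absorbing device is the crux that lets the conclusion hold under the mere inequality $\ell\ge\ell_f$, without any attainment or closedness of $\Solv$.

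For the efficiency assertion I would run the analogous computation, now knowing only $w=f(\bar x)-f(x)-\ell\psi(x)e\in K$ for $x\in\ball{\bar x}{\delta}\cap S$ with $F(x)\in f(\bar x)-K$. The strict gap $\ell>\ell_f$ is what does the work: if $\psi(x)>0$, then for $\epsilon$ small one gets $\beta_\epsilon\ge(\ell-\ell_f)\psi(x)-\ell_f\epsilon>0$, hence $\beta_\epsilon e\in\inte K$ and $f(\bar x)-f(x_\epsilon)\in K+\inte K+K=\inte K$; efficiency of $\bar x$ then forces $f(x_\epsilon)=f(\bar x)$, i.e.\ $\nullv\in\inte K$, contradicting properness of $K$. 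Thus $\psi(x)=0$, and since $\Solv$ is closed, condition (i) gives $\dist{x}{\Solv}\le\psi(x)=0$, so $x\in\Solv$; then $F(x)=f(x)\in f(\bar x)-K$ with $x\in\Solv\cap\ball{\bar x}{\delta_1}$, and efficiency yields $f(x)=f(\bar x)$, i.e.\ $F(x)=F(\bar x)$. The closedness of $\Solv$ and the strict parameter $\ell>\ell_f$ are exactly the two extra hypotheses that upgrade the weak-efficiency argument to the efficiency one.
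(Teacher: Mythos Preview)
The paper does not supply its own proof of this proposition: it is quoted from \cite[Theorem 4.2(i)]{Ye12} and used as a black box (the line following its statement explicitly calls $\VPl$ ``unconstrained'' and moves on to apply it). So there is no in-paper argument to compare against; your task reduces to checking that your contradiction argument stands on its own. It does: the near-projection $x_\epsilon\in\Solv$, the $K$-Lipschitz transport, the identity $f(\bar x)-f(x_\epsilon)=w+\beta_\epsilon e+k_\epsilon$, and the absorbing trick $w+\beta_\epsilon e\in\ball{w}{\rho}\subseteq\inte K$ for small $\epsilon$ are all sound, and the efficiency upgrade via $\ell>\ell_f$ and closedness of $\Solv$ is handled correctly.

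One point worth flagging explicitly. You keep the geometric constraint $x\in S$ in $\VPl$, and that is the right reading here: the error-bound inequality in Definition~\ref{def:locerbofun}(i) is only asserted on $\ball{\bar x}{\delta}\cap S$, so without restricting to $S$ the argument would have no leverage on points $x\notin S$. The paper's remark that $\VPl$ is ``unconstrained'' should be read as ``freed from the constraint $G(x)\subseteq C$''; indeed, when the proposition is invoked in Proposition~\ref{pro:CQpen} the conclusion is that $\bar x$ is locally w-eff.\ for $\VOPGl$, which \emph{is} constrained to $S$. Your parenthetical acknowledging this is exactly the right caveat; you might promote it to a sentence so a reader does not stumble on the discrepancy with the paper's wording.
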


Proposition \ref{pro:thm42(i)Ye} enables one to free the original
problem from its constraints. Notice indeed that problem $\VPl$
is unconstrained. For problems such as $\VOPG$, where the feasible
region is structured as a solution set to $\SVI$, one has to adequate
the local error bound function to the the constraint definition.
In the present analysis, the following merit function $\merit:\X
\longrightarrow\R\cup\{\pm\infty\}$ for problems $\SVI$ is exploited
to treat the data uncertainty in the constraints:
$$
  \merit(x)=\sup_{z\in G(x)}\dist{z}{C}=\exc{G(x)}{C}.
$$
Henceforth, as a standing assumption it is assumed that $\dom G=\X$.
As a consequence, one has $\merit:\X\longrightarrow[0,+\infty]$
and therefore the following characterization of the feasible region of $\VOPG$:
$$
  \Solv=S\cap [\merit\le 0].
$$
The next lemma singles out a constraint qualification, under which the
merit function $\merit$ is shown to actually work as a local error bound
function. In order to formulate it, let us recall that,
after \cite{DeMaTo80}, given a function $\varphi:X\longrightarrow\R
\cup\{\pm\infty\}$ defined on a metric space $(X,d)$ and $\bar x
\in\varphi^{-1}(\R)$, the strong slope of $\varphi$ at $\bar x$
is defined as the quantity
\begin{eqnarray*}
  \stsl{\varphi}(\bar x)=\left\{
  \begin{array}{ll}
  0, & \hbox{ if $\bar x$ is a local minimizer of $\varphi$}, \\
  \displaystyle\limsup_{x\to\bar x}{\varphi(\bar x)-\varphi(x)\over d(x,\bar x)},
  & \hbox{ otherwise.}
  \end{array}\right.
\end{eqnarray*}
In view of the formulation of the next lemma,
it is useful to observe that, if as a metric space $X$ one takes
a closed subset $S\subseteq\X$ containing $\bar x$ and as a distance
$d$ one takes the distance induced by $\|\cdot\|$, the above definition
becomes
\begin{eqnarray*}
  \stslS{\varphi}(\bar x)=\left\{
  \begin{array}{ll}
  0, & \hbox{ if $\bar x$ is a local minimizer} \\
  & \hbox{ of $\varphi$ over $S$}, \\
  \displaystyle\inf_{r>0}\sup_{x\in\ball{\bar x}{r}\cap S\backslash\{\bar x\}}
  {\varphi(\bar x)-\varphi(x)\over \|x-\bar x\|},  & \hbox{ otherwise.}
  \end{array}\right.
\end{eqnarray*}

\begin{lemma}    \label{lem:locerbofun}
Let $G:\X\rightrightarrows\Z$, $S$ and $C$ as in problem
$\SVI$, and let $\bar x\in\Solv$. Suppose that:

(i) $G$ is l.s.c. in a neighbourhood of $\bar x$;

(ii) there exist positive $\sigma$ and $r$ such that
\begin{equation*}
  \stslS{\merit}(x)\ge\sigma,\quad\forall x\in\ball{\bar x}{r}
  \cap S\cap[\merit>0].  \leqno \CQ
\end{equation*}

\noindent Then function $\psi=\sigma^{-1}\merit$ is a local error bound
function for $\Solv$.
\end{lemma}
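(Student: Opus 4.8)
The plan is to verify directly the two defining properties of a local error bound function for $\psi=\sigma^{-1}\merit$. Property (ii) is immediate: if $x\in\Solv=S\cap[\merit\le 0]$, then, since the standing assumption $\dom G=\X$ forces $\merit\ge 0$, one has $\merit(x)=0$, whence $\psi(x)=0=\dist{x}{\Solv}$. The whole weight of the statement therefore rests on property (i), namely on producing $\delta>0$ with
$$
  \dist{x}{\Solv}\le\sigma^{-1}\merit(x),\qquad\forall x\in\ball{\bar x}{\delta}\cap S.
$$
Here one may assume $\merit(x)>0$, for otherwise $x\in S\cap[\merit\le 0]=\Solv$ and the inequality is trivial.

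Before the main argument I would record two preliminary facts. First, from hypothesis (i) the merit function is l.s.c.\ near $\bar x$: given $z_0\in G(x_0)$ and $x_n\to x_0$, lower semicontinuity of $G$ yields $z_n\in G(x_n)$ with $z_n\to z_0$, so that $\merit(x_n)\ge\dist{z_n}{C}\to\dist{z_0}{C}$; taking the supremum over $z_0\in G(x_0)$ gives $\liminf_{n}\merit(x_n)\ge\merit(x_0)$. Second, on the closed (hence complete) metric space $\ball{\bar x}{r}\cap S$ the function $\merit$ is bounded below by $0$ and, since $\bar x\in\Solv$, attains $\merit(\bar x)=0$, so that $\inf_{\ball{\bar x}{r}\cap S}\merit=0$. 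This is the setting in which I intend to run the slope-based error bound mechanism, recalled through Ekeland's variational principle.

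I would then set $\delta=r/3$ and fix $x\in\ball{\bar x}{\delta}\cap S$ with $\merit(x)>0$, splitting into two regimes. If $\merit(x)\ge\sigma\delta$, then $\dist{x}{\Solv}\le\|x-\bar x\|\le\delta\le\sigma^{-1}\merit(x)$ because $\bar x\in\Solv$, and there is nothing to prove. If instead $0<\merit(x)<\sigma\delta$, then for every $\tau\in(1,2)$ I apply Ekeland's principle on $\ball{\bar x}{r}\cap S$ with $\epsilon=\merit(x)$ and $\lambda=\tau\sigma^{-1}\merit(x)$, obtaining $y\in\ball{\bar x}{r}\cap S$ such that $\merit(y)\le\merit(x)$, $\|y-x\|\le\lambda$, and $y$ minimizes $z\mapsto\merit(z)+(\epsilon/\lambda)\|z-y\|$ over $\ball{\bar x}{r}\cap S$. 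The calibration of $\delta$ gives
$$
  \|y-\bar x\|\le\|y-x\|+\|x-\bar x\|\le\tau\sigma^{-1}\merit(x)+\delta<(\tau+1)\delta<r,
$$
so $y$ admits an $S$-neighbourhood contained in $\ball{\bar x}{r}\cap S$; consequently the Ekeland minimality translates into the slope estimate $\stslS{\merit}(y)\le\epsilon/\lambda=\sigma/\tau<\sigma$. Were $\merit(y)>0$, then $y\in\ball{\bar x}{r}\cap S\cap[\merit>0]$ and $\CQ$ would force $\stslS{\merit}(y)\ge\sigma$, a contradiction. Hence $\merit(y)=0$, i.e.\ $y\in\Solv$, and therefore $\dist{x}{\Solv}\le\|x-y\|\le\tau\sigma^{-1}\merit(x)$. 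Letting $\tau\downarrow 1$ yields $\dist{x}{\Solv}\le\sigma^{-1}\merit(x)$, which is property (i) with the announced $\delta=r/3$.

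The delicate point, and the step I expect to require the most care, is the passage from the Ekeland minimality over the restricted complete space $\ball{\bar x}{r}\cap S$ to the strong slope $\stslS{\merit}(y)$, which is computed using all nearby points of $S$: this is legitimate precisely because $\|y-\bar x\|<r$ places a full $S$-neighbourhood of $y$ inside the ball, so that the relevant difference quotients are all controlled by $\epsilon/\lambda$. Controlling the location of $y$ is exactly what the two-regime split and the calibration $\delta=r/3$, $\tau<2$ are designed to guarantee; the far-from-feasible regime, where the Ekeland point could otherwise escape $\ball{\bar x}{r}$ and fall outside the reach of $\CQ$, is disposed of for free by the crude bound $\dist{x}{\Solv}\le\delta$.
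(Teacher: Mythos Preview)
Your argument is correct and follows the same slope-based error-bound mechanism as the paper; the only difference is that you spell out the Ekeland variational principle directly (obtaining $\delta=r/3$), whereas the paper invokes the packaged result \cite[Corollary~3.1]{AzeCor14} on the complete metric space $\ball{\bar x}{r}\cap S$ (obtaining $\delta=r/2$). One small point you should make explicit: hypothesis~(i) only guarantees that $\merit$ is l.s.c.\ on some $\ball{\bar x}{\delta_0}$, so before running Ekeland on $\ball{\bar x}{r}\cap S$ you need to observe (as the paper does) that $r$ may be shrunk to satisfy $r<\delta_0$ without affecting $\CQ$.
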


\begin{proof}
From \cite[Lemma 2.3(i)]{Uder19} it is known that the lower semicontinuity of $G$
(in the sense of set-valued mappings) implies the lower semicontinuity for
the functional $\merit$. Thus, by hypothesis (i), for some $\delta_0>0$ it is
true that $\merit$ is l.s.c. on $\ball{\bar x}{\delta_0}\cap S$.
Notice that, as a closed subset of
a Banach space, $\ball{\bar x}{\delta_0}\cap S$ is a complete metric space,
if equipped with the induced metric. Besides, without any loss of
generality, it is possible to assume that, if $r>0$ is as in hypothesis (ii),
it is $r<\delta_0$.
Notice that the case $\ball{\bar x}{r}\cap S\cap[\merit>0]=\varnothing$
means $\ball{\bar x}{r}\cap S\subseteq G^{+1}(C)\cap S$, so it holds
$\dist{x}{\Solv}=0\le\psi(x)$ for every $x\in\ball{\bar x}{r}\cap S$
and any $\psi:\X\longrightarrow [0,+\infty]$.
Otherwise, it is possible to apply \cite[Corollary 3.1]{AzeCor14} with
$X=\ball{\bar x}{r}\cap S$, according to which
$$
  \dist{x}{\Solv}=\dist{x}{S\cap[\merit\le 0]}\le\frac{\merit(x)}{\sigma},
  \quad\forall x\in\ball{\bar x}{r/2}\cap S.
$$
Thus, setting $\delta=r/2$ and $\psi(x)=\sigma^{-1}\merit$,
the condition (i) in Definition \ref{def:locerbofun} is fulfilled.
Since under the above assumptions $\Solv$ is closed, one has
$$
  \dist{x}{\Solv}=0=\psi(x),\quad\forall x\in\Solv,
$$
so also the condition (ii) in Definition \ref{def:locerbofun} is
readily satisfied. This completes the proof.
\end{proof}

With the specialization of $\psi$ above introduced, upon the
constraint qualification $\CQ$, the penalization principle
for vector optimization takes the following form.

\begin{proposition}   \label{pro:CQpen}
With reference to a problem $\VOPG$, let $\bar x\in\Solv=S\cap
G^{+1}(C)$. Suppose that:

(i) $f$ is $K$-Lipschitz near $\bar x$, with constant $\ell_f$
and $e\in\inte K$;

(ii) $G$ is l.s.c. in a neighbourhood of $\bar x$ and condition
$\CQ$ is satisfied.

\noindent Then, for any $\ell\ge\ell_f$, every local w-eff. solution
to $\VOPG$ is also a local w-eff. solution to problem
$$
  \minv [f(x)+\ell\sigma^{-1}\merit(x)e] \quad\hbox{ subject to $x\in S$ }.
     \leqno \VOPGl
$$
For any $\ell>\ell_f$, every local eff. solution to $\VOPG$
is a local eff. solution to $\VOPGl$.
\end{proposition}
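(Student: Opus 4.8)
The plan is to derive Proposition~\ref{pro:CQpen} directly from the two preceding results, Lemma~\ref{lem:locerbofun} and Proposition~\ref{pro:thm42(i)Ye}, which have been set up precisely for this combination. The first thing I would record is that, since $\bar x\in\Solv=S\cap[\merit\le 0]$ while $\merit$ takes values in $[0,+\infty]$, one necessarily has $\merit(\bar x)=0$. Hence the penalty term vanishes at $\bar x$ and the criterion of $\VOPGl$ satisfies $f(\bar x)+\ell\sigma^{-1}\merit(\bar x)e=f(\bar x)$, so that the reference value against which (weak) efficiency is tested is the same for $\VOPG$ and for $\VOPGl$.

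Next I would invoke Lemma~\ref{lem:locerbofun}, whose hypotheses coincide verbatim with item~(ii) of the present statement, namely the lower semicontinuity of $G$ near $\bar x$ together with the constraint qualification $\CQ$. The lemma then guarantees that $\psi=\sigma^{-1}\merit$ is a local error bound function for $\Solv$ in the sense of Definition~\ref{def:locerbofun}. With this $\psi$ available, I would apply the penalization principle of Proposition~\ref{pro:thm42(i)Ye} to problem $\VOPG$, whose feasible region is exactly $\Solv$: item~(i) here supplies the $K$-Lipschitz continuity of $f$ near $\bar x$, with constant $\ell_f$ and vector $e\in\inte K$, demanded there. Substituting $\psi=\sigma^{-1}\merit$ into the penalized objective $f+\ell\psi e$ reproduces verbatim the criterion $f+\ell\sigma^{-1}\merit\,e$ of $\VOPGl$, whence for every $\ell\ge\ell_f$ each local w-eff. solution of $\VOPG$ is a local w-eff. solution of $\VOPGl$. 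For the efficiency part I would note that, by \cite[Lemma 2.3(i)]{Uder19}, the lower semicontinuity of $G$ makes $\merit$ l.s.c., so that $\Solv=S\cap[\merit\le 0]$ is closed; the closedness requirement in the second half of Proposition~\ref{pro:thm42(i)Ye} is thus met, and the conclusion for local eff. solutions follows for every $\ell>\ell_f$.

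The only point requiring genuine care, and the reason the statement is not a purely mechanical citation, is the treatment of the geometric constraint $S$. The error bound delivered by Lemma~\ref{lem:locerbofun} dominates $\dist{x}{\Solv}$ only for $x\in\ball{\bar x}{\delta}\cap S$, and not on a full neighbourhood of $\bar x$ in $\X$. Consequently the penalization can absorb only the set-valued inclusion constraint $G(x)\subseteq C$, equivalently $\merit(x)\le 0$, while the constraint $x\in S$ must be kept as a hard constraint; this is exactly why $\VOPGl$ is posed subject to $x\in S$ rather than as a genuinely unconstrained problem. I would therefore make explicit that Proposition~\ref{pro:thm42(i)Ye} is being used relative to the ambient set $S$ appearing in Definition~\ref{def:locerbofun}, so that the $S$ encoded in the error bound is transported unchanged into the penalized problem, and verify that the localization radii produced by the two results can be intersected to a single $\delta>0$ valid for both conclusions.
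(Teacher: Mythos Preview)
Your proposal is correct and follows essentially the same route as the paper: invoke Lemma~\ref{lem:locerbofun} to certify that $\sigma^{-1}\merit$ is a local error bound function for $\Solv$, then apply both assertions of Proposition~\ref{pro:thm42(i)Ye}, using the closedness of $\Solv$ for the efficiency part. Your additional care about the role of the hard constraint $S$ (the error bound holds only on $\ball{\bar x}{\delta}\cap S$, so the penalization absorbs only the inclusion constraint while $S$ persists in $\VOPGl$) and your explicit justification of the closedness of $\Solv$ via the lower semicontinuity of $\merit$ are points the paper's terse proof leaves implicit.
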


\begin{proof}
Since $f$ is $K$-Lipschitz near $\bar x$, $\inte K$ is open
and, under the above assumptions, according to Lemma
\ref{lem:locerbofun}, $\sigma^{-1}\merit$ is a local error
bound function for $\Solv$, then the first assertion in Proposition
\ref{pro:thm42(i)Ye} can be invoked. This yields that $\bar x$
is a local w-eff. solution to problem $\VOPGl$, for any $\ell\ge\ell_f$.

Since $\Solv$ is closed, in the case $\bar x\in\Solv$ is a local
eff. solution to $\VOPG$, it suffices to apply the second assertion
in Proposition \ref{pro:thm42(i)Ye}, in order to conclude that $\bar x$
is a local eff. solution to problem $\VOPGl$, for any $\ell>\ell_f$.
\end{proof}


The constraint qualification $\CQ$ is expressed in terms of $\merit$.
This function can be built by means of the problem data. Nevertheless, it would
be useful to formulate conditions ensuring the validity of $\CQ$
directly on $G$. This can be done by exploiting the metric increase
property, as introduced in \cite{Uder19}.

\begin{definition}[Metrically $C$-increasing mapping]
Let $S\subseteq\X$ be a (nonempty) closed set and let
$C\subseteq\Z$ be a closed, convex cone, with $C\ne\{\nullv\}$.
A set-valued mapping $F:\X\rightrightarrows\Z$ between Banach spaces
is said to be
{\it metrically $C$-increasing} around $\bar x\in\dom G$, relative
to $S$, if there exist $\delta>0$ and $\alpha>1$ such that
\begin{equation}\label{in:mincrSx}
  \forall x\in\ball{\bar x}{\delta}\cap S,\ \forall r\in (0,\delta)
  \quad\exists z\in\ball{x}{r}\cap S:\ \ball{F(z)}{\alpha r}
  \subseteq\ball{F(x)+C}{r}.
\end{equation}
The quantity
$$
  \incr{F}{S}{\bar x}=\sup\{\alpha>1:\ \exists\delta>0
  \hbox{ for which (\ref{in:mincrSx}) holds}\}
$$
is called exact {\it exact bound of metric $C$-increase}
of $F$ around $\bar x$, relative to $S$.
\end{definition}

Several examples of metrically increasing mappings, along with an
infinitesimal criterion for detecting the occurrence of this
property, are provided in \cite{Uder19}.
The next proposition enlightens the role of the metric increase property
as a constraint qualification condition.

\begin{proposition}    \label{pro:mincrCQ}
 Let $G:\X\rightrightarrows\Z$, $S$ and $C$ as in problem
$\SVI$, and let $\bar x\in\Solv$. Suppose that:

(i) $G$ is l.s.c. in a neighbourhood of $\bar x$;

(ii) $G$ is metrically $C$-increasing around $\bar x$,
relative to $S$.

\noindent Then condition $\CQ$ holds true with $\sigma=
\alpha-1$ and $r=\delta$, for any $\alpha\in (1,\incr{G}{S}{\bar x})$
and $\delta$ as in $(\ref{in:mincrSx})$.
\end{proposition}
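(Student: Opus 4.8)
The plan is to turn the metric $C$-increase of $G$ into a quantitative decrease of the merit function $\merit$ and then read off the strong-slope bound required by $\CQ$. First I would fix $\alpha\in(1,\incr{G}{S}{\bar x})$, choose $\delta>0$ so that $(\ref{in:mincrSx})$ holds with this $\alpha$, and set $\sigma=\alpha-1$, $r=\delta$. Then I would take an arbitrary $x\in\ball{\bar x}{\delta}\cap S\cap[\merit>0]$ (with $\merit(x)$ finite, as is implicit in the definition of $\stslS{\merit}(x)$) and aim to show $\stslS{\merit}(x)\ge\sigma$.

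The core of the argument is the estimate that, whenever $z\in\X$ and $\rho>0$ satisfy $\ball{G(z)}{\alpha\rho}\subseteq\ball{G(x)+C}{\rho}$ and $\merit(z)>0$, one has $\merit(z)\le\merit(x)-(\alpha-1)\rho$. To obtain it I would apply the (inclusion-monotone) excess functional $\exc{\cdot}{C}$ to both sides of the set inclusion. On the right, since $C$ is a convex cone, $\dist{v+c}{C}\le\dist{v}{C}$ for every $c\in C$, so $\exc{G(x)+C}{C}=\merit(x)$; as $\dist{\cdot}{C}$ is $1$-Lipschitz, a $\rho$-enlargement adds at most $\rho$, whence $\exc{\ball{G(x)+C}{\rho}}{C}\le\merit(x)+\rho$. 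The matching lower bound $\exc{\ball{G(z)}{\alpha\rho}}{C}\ge\merit(z)+\alpha\rho$ is the step I expect to be the main obstacle: in a general Banach space one cannot simply translate a point radially away from its metric projection. I would instead work pointwise, using the standard duality formula $\dist{w}{C}=\sup\{\langle\xi,w\rangle:\xi\in\ndc{C},\ \|\xi\|\le1\}$ (with $\ndc{C}$ the negative dual cone, cf.\ Remark \ref{rem:dconepro}). For a near-maximizer $w\in G(z)$ with $\dist{w}{C}>0$ I would pick a norm-one $\xi\in\ndc{C}$ nearly attaining this supremum and a direction $\eta$ with $\|\eta\|\le1$ and $\langle\xi,\eta\rangle$ close to $1$; then $u=w+\alpha\rho\,\eta\in\ball{w}{\alpha\rho}$ satisfies $\dist{u}{C}\ge\langle\xi,u\rangle\ge\dist{w}{C}+\alpha\rho-\varepsilon$, and letting $\varepsilon\downarrow0$ yields the bound. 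Combining the two sides gives $\merit(z)+\alpha\rho\le\merit(x)+\rho$, i.e.\ the claimed decrease.

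Finally, I would feed this into the strong slope. For every $\rho\in(0,\delta)$ the metric $C$-increase at $x$ supplies $z_\rho\in\ball{x}{\rho}\cap S$ with $\ball{G(z_\rho)}{\alpha\rho}\subseteq\ball{G(x)+C}{\rho}$, hence $\|z_\rho-x\|\le\rho$ and $\merit(x)-\merit(z_\rho)\ge\min\{\merit(x),(\alpha-1)\rho\}$ --- the estimate above when $\merit(z_\rho)>0$, and the trivial value $\merit(x)$ when $\merit(z_\rho)=0$ (a case that only enlarges the ensuing quotient). In particular $\merit(z_\rho)<\merit(x)$ for points arbitrarily close to $x$, so $x$ is not a local minimizer of $\merit$ over $S$ and the difference-quotient branch of the strong-slope formula is in force. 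Restricting to $\rho$ small enough that $(\alpha-1)\rho<\merit(x)$, so that $z_\rho\ne x$ and the numerator is at least $(\alpha-1)\rho$, I obtain
\[
  \sup_{y\in\ball{x}{\rho}\cap S\setminus\{x\}}\frac{\merit(x)-\merit(y)}{\|y-x\|}\ge\frac{\merit(x)-\merit(z_\rho)}{\|z_\rho-x\|}\ge\frac{(\alpha-1)\rho}{\rho}=\alpha-1.
\]
Since this holds for all such $\rho$ and the supremum is nondecreasing in $\rho$, taking the infimum over $\rho>0$ gives $\stslS{\merit}(x)\ge\alpha-1=\sigma$. As $x$ was an arbitrary point of $\ball{\bar x}{\delta}\cap S\cap[\merit>0]$, condition $\CQ$ holds with $\sigma=\alpha-1$ and $r=\delta$.
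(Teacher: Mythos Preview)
Your argument is correct and follows the same line as the paper's: use the metric $C$-increase to produce, for each small $\rho$, a point $z_\rho\in\ball{x}{\rho}\cap S$ with $\merit(x)-\merit(z_\rho)\ge(\alpha-1)\rho$, and read off the strong-slope bound. Two differences are worth noting. First, where the paper invokes \cite[Lemma 2.2]{Uder19} for the identity $\exc{\ball{A}{r}}{C}=\exc{A}{C}+r$, you supply a direct duality argument for the only inequality that is actually needed; this is self-contained and legitimate. Second, and more interestingly, the paper uses hypothesis (i) (lower semicontinuity of $G$, hence of $\merit$) solely to guarantee $\merit(z_r)>0$ for $r$ small, so that the excess identity applies to $G(z_r)$; you bypass this by treating the case $\merit(z_\rho)=0$ separately, where the decrease $\merit(x)-\merit(z_\rho)=\merit(x)$ is even larger. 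Your route therefore shows that hypothesis (i) is not needed for Proposition~\ref{pro:mincrCQ} itself (it is, of course, used elsewhere, e.g.\ in Lemma~\ref{lem:locerbofun}).
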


\begin{proof}
As already seen, by hypothesis (i) the function $\merit$ is l.s.c.
in $\ball{\bar x}{\delta_0}$, for some $\delta_0>0$.
According to hypothesis (ii), fixed $\alpha\in (1,\incr{G}{S}{\bar x}))$,
there exists $\delta_\alpha>0$ such that $(\ref{in:mincrSx})$ holds.
Observe that the nature of the metric $C$-increase property around
$\bar x$ allows one to assume without loss of generality that $\delta_\alpha
<\delta_0$.

Now, let us take an arbitrary $x\in\ball{\bar x}{\delta_\alpha}\cap S
\cap [\merit>0]$. Since, under the current assumptions, $\merit$
is l.s.c. at $x\in\ball{\bar x}{\delta_0}$, there exists $\delta_x>0$
such that $\merit(z)>0$ for every $z\in\ball{x}{\delta_x}$.
Take any $r>0$  such that $r<\min\{\delta_\alpha,\delta_x\}$.
According to $(\ref{in:mincrSx})$, there  exists $z_r\in\ball{x}{r}
\cap S$ such that
\begin{equation}    \label{in:mincrSzr}
  \ball{G(z_r)}{\alpha r} \subseteq\ball{G(x)+C}{r}.
\end{equation}
Notice that it must be $z_r\ne x$. Indeed, since it is $\merit(x)>0$
(namely, it is $\exc{G(x)}{C}>0$), if it were $z_r=x$, then by
inclusion $(\ref{in:mincrSzr})$ and \cite[Lemma 2.2]{Uder19},
one would find
\begin{eqnarray*}
  \merit(x)+\alpha r &=& \exc{\ball{G(x)}{\alpha r}}{C}=
  \exc{\ball{G(z_r)}{\alpha r}}{C} \\
  &\le &\exc{\ball{G(x)+C}{r}}{C}=\merit(x)+r ,
\end{eqnarray*}
wherefrom $\alpha\le 1$, in contrast with the fact that $\alpha>1$.
Furthermore, by applying once again inclusion $(\ref{in:mincrSzr})$
and taking into account that $\merit(z_r)>0$, so \cite[Lemma 2.2]{Uder19}
still works, one obtains
\begin{eqnarray*}
  \merit(z_r) &=& \exc{\ball{G(z_r)}{\alpha r}}{C} -\alpha r\le
  \exc{\ball{G(x)+C}{r}}{C}-\alpha r \\
  &= &\exc{G(x)+C}{C}+r-\alpha r=\merit(x)+(1-\alpha)r.
\end{eqnarray*}
As it is $z_r\in\ball{x}{r}\cap S$, from the last inequality chain
it follows
$$
  \merit(x)-\merit(z_r)\ge (\alpha-1)r\ge(\alpha-1)\|z_r-x\|.
$$
This inequality says that $x$ fails to be a local minimizer
for $\merit$ and therefore allows one to get the following estimate
$$
   \stslS{\merit}(x) =\limsup_{z \xrightarrow{S} x}
   \frac{\merit(x)-\merit(z)}{\|x-z\|} \ge
   \lim_{r\downarrow 0}\frac{\merit(x)-\merit(z_r)}{\|x-z_r\|}
   \ge \alpha-1.
$$
By arbitrariness of $x\in\ball{\bar x}{\delta_\alpha}\cap S
\cap [\merit>0]$, the last inequalities show that condition
$\CQ$ is satisfied with $\sigma=\alpha-1$ and $r=\delta_\alpha$,
thereby completing the proof.
\end{proof}

On the base of the constraint system analysis exposed above, one is now
in a position to formulate necessary weak efficiency condition
for problems $\VOPG$. To this aim, it remains to recall some further
element of nonsmooth analysis.

Let $\varphi:\X\longrightarrow\R\cup\{\pm\infty\}$ be a
function which is finite around $\bar x\in\varphi^{-1}(\R)$.
Following \cite[Section 1.3.2]{Mord06}, the set
$$
  \Fusub\varphi(\bar x)=\left\{x^*\in\X^*:\ \limsup_{x\to\bar x}
  {\varphi(x)-\varphi(\bar x)-\langle x^*,x-\bar x\rangle\over
  \|x-\bar x\|}\le 0\right\}
$$
is called the Fr\'echet upper subdifferential of $\varphi$ at $\bar x$.
It is readily seen that, whenever $\varphi$ is (Fr\'echet)
differentiable at $\bar x$, then $\Fusub\varphi(\bar x)=\{
\Fder{\varphi}{\bar x}\}$, whereas whenever $\varphi:\X\longrightarrow
\R$ is concave, the set $\Fusub\varphi(\bar x)$ reduces to the
superdifferential of $\varphi$ at $\bar x$, in the sense of convex
analysis.

\begin{remark}    \label{rem:varformFusub}
The following variational description of the Fr\'echet upper subdifferential
of $\varphi$ at $\bar x$ will be exploited in the sequel: for every $x^*\in\Fusub
\varphi(\bar x)$ there exists a function $\varsigma:\X\longrightarrow\R$,
Fr\'echet differentiable at $\bar x$ and with $\varphi(\bar x)=\varsigma(\bar x)$,
such that $\varphi(x)\le\varsigma(x)$ for every $x\in\X$ and
$\Fder{\varsigma}{\bar x}=x^*$ (to get it, it suffices to
remember that $\Fusub\varphi(\bar x)=-\Fsub(-\varphi)(\bar x)$,
where $\Fsub$ denotes the Fr\'echet subdifferential, and then apply
\cite[Theorem 1.88(i)]{Mord06}).
\end{remark}

Given a mapping $f:\X\longrightarrow\Y$ between Banach spaces and $\bar x\in\X$
$\dder{f}{\bar x}{v}$ indicates the directional derivative of $f$
at $\bar x$, in the direction $v\in\X$. If its directional derivative
exists for every $v\in\X$, $f$ is said to be directionally differentiable
at $\bar x$.

A first-order necessary condition for weak efficiency of solutions to
$\VOPG$ can be stated as follows.

\begin{theorem}[Weak efficiency condition via penalization]   \label{thm:weconpenal}
With reference to a problem $\VOPG$, let $\bar x\in\Solv=S\cap G^{+1}(C)$
be a local w-eff. solution to $\VOPG$. Suppose that:

(i) $f$ is $K$-Lipschitz near $\bar x$, with constant $\ell_f$
and $e\in\inte K$, and is directionally differentiable at $\bar x$;

(ii) $G$ is l.s.c. in a neighbourhood of $\bar x$ and metrically
$C$-increasing around $\bar x$, relative to $S$;

(iii) $\Fusub\merit(\bar x)\ne\varnothing$.

\noindent Then for any $\alpha\in (1,\incr{G}{S}{\bar x})$, $\ell\ge
\ell_f$ and $x^*\in\Fusub\merit(\bar x)$ it must be
\begin{equation}    \label{notin:weffconpenal}
  \dder{f}{\bar x}{v}+\frac{\ell}{\alpha-1}\langle x^*,v\rangle e
  \not\in\inte K,\quad\forall v\in\Iang{S}{\bar x}.
\end{equation}
\end{theorem}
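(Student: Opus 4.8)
The plan is to derive $(\ref{notin:weffconpenal})$ from the penalization results above, after trading the nonsmooth penalty term for a smooth majorant. First I would turn the metric increase hypothesis into the analytic constraint qualification: by Proposition \ref{pro:mincrCQ}, since $G$ is l.s.c. near $\bar x$ and metrically $C$-increasing around $\bar x$ relative to $S$, for each fixed $\alpha \in (1, \incr{G}{S}{\bar x})$ condition $\CQ$ holds with $\sigma = \alpha - 1$. Feeding this into Proposition \ref{pro:CQpen} --- applicable because $f$ is $K$-Lipschitz near $\bar x$ with vector $e \in \inte K$ --- I obtain that, for every $\ell \ge \ell_f$, the point $\bar x$ is still a local w-eff. solution of the penalized problem $\VOPGl$, namely of $\minv [f(x) + \frac{\ell}{\alpha-1}\merit(x)e]$ subject to $x \in S$. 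This frees the analysis from the set-valued inclusion constraint, at the price of a nonsmooth objective.

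Next I would linearize the penalty at $\bar x$. Fixing $x^* \in \Fusub\merit(\bar x)$ (nonempty by (iii)) and applying the variational description of Remark \ref{rem:varformFusub} to $\merit$, I get a function $\varsigma : \X \to \R$, Fr\'echet differentiable at $\bar x$, with $\varsigma(\bar x) = \merit(\bar x) = 0$, $\merit(x) \le \varsigma(x)$ for all $x$, and $\Fder{\varsigma}{\bar x} = x^*$. Setting $\widetilde F(x) = f(x) + \frac{\ell}{\alpha-1}\varsigma(x)e$, the inequality $\varsigma - \merit \ge 0$ together with $\frac{\ell}{\alpha-1} > 0$ and $e \in K$ gives $\widetilde F(x) - (f(x) + \frac{\ell}{\alpha-1}\merit(x)e) = \frac{\ell}{\alpha-1}(\varsigma(x)-\merit(x))e \in K$. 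Since both mappings take the value $f(\bar x)$ at $\bar x$, the absorption rule $\inte K + K \subseteq \inte K$ then lets me transfer local weak efficiency from the penalized problem to $\minv\widetilde F(x)$ over $S$. The payoff is that $\widetilde F$ is directionally differentiable at $\bar x$ --- $f$ by (i), $\varsigma e$ because $\varsigma$ is Fr\'echet differentiable --- with $\widetilde F'(\bar x; v) = \dder{f}{\bar x}{v} + \frac{\ell}{\alpha-1}\langle x^*, v\rangle e$, exactly the vector appearing in $(\ref{notin:weffconpenal})$.

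Finally I would descend to the infinitesimal level along feasible directions. Given $v \in \Iang{S}{\bar x}$, there is $\delta_v > 0$ with $\bar x + tv \in S$ for all $t \in (0,\delta_v)$, so for small $t$ the point $\bar x + tv$ falls inside the neighbourhood on which $\bar x$ is weakly efficient for $\widetilde F$; reading the efficiency relation for these increments yields $\widetilde F(\bar x + tv) - \widetilde F(\bar x) \notin \inte K$. Dividing by $t > 0$ (which, $\inte K$ being a cone, preserves non-membership) and letting $t \downarrow 0$, the quotient tends to $\widetilde F'(\bar x; v)$; since the complement of $\inte K$ is closed, the limit satisfies $\widetilde F'(\bar x; v) \notin \inte K$, which is precisely $(\ref{notin:weffconpenal})$. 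I expect the main obstacle to be the sign bookkeeping of the order cone: one must track on which side of the weak-efficiency relation each increment sits so that the limiting exclusion lands on $\inte K$ and not on $-\inte K$, and one must make sure that the smooth-majorant substitution does not weaken the weak efficiency of $\bar x$ --- this is exactly where $e \in K$ and the absorption $\inte K + K \subseteq \inte K$ enter.
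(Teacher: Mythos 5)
Your proposal follows the paper's proof essentially step for step: Proposition \ref{pro:mincrCQ} converts the metric $C$-increase into condition $\CQ$, Proposition \ref{pro:CQpen} performs the penalization, Remark \ref{rem:varformFusub} supplies the smooth majorant $\varsigma$ of $\merit$, and the conclusion comes from a directional limit using the closedness of the complement of the relevant open cone. Your only packaging difference --- transferring local weak efficiency to the majorized mapping $\widetilde F=f+\frac{\ell}{\alpha-1}\varsigma e$ via the absorption rule before differentiating, rather than adding the $K$-valued correction term at the difference-quotient level as the paper does --- is mathematically equivalent; and your direct treatment of an arbitrary $v\in\Iang{S}{\bar x}$ (the paper first restricts to $\Iang{S}{\bar x}\cap\Uball$ and then extends by positive homogeneity) is fine.

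The one genuine slip is precisely the sign bookkeeping you yourself flagged as the main obstacle. Local weak efficiency of $\bar x$ for $\widetilde F$ over $S$ means $\widetilde F(\bar x+tv)\notin\widetilde F(\bar x)-\inte K$, i.e.\ the increments $\widetilde F(\bar x+tv)-\widetilde F(\bar x)$ avoid $-\inte K$, not $\inte K$ as you wrote; dividing by $t>0$ and letting $t\downarrow 0$ in the closed set $\Y\backslash(-\inte K)$ then yields $\dder{f}{\bar x}{v}+\frac{\ell}{\alpha-1}\langle x^*,v\rangle e\notin-\inte K$. This is also what the paper's own proof actually establishes (it works throughout with $\Y\backslash(-\inte K)$), so the displayed condition $(\ref{notin:weffconpenal})$, which as printed reads $\notin\inte K$, carries a sign typo: as a necessary condition at a minimizer the exclusion must be from $-\inte K$ (exclusion from $\inte K$ would rule out strictly increasing directions, which is absurd at a minimum, and it is the $-\inte K$ version that feeds the separation argument in the subsequent convexity theorem). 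Your argument is repaired by replacing $\inte K$ with $-\inte K$ at the increment stage and carrying that through the limit; note that your absorption step is then needed in the form used by the paper, namely $y+K\subseteq\Y\backslash(-\inte K)$ whenever $y\notin-\inte K$, rather than $\inte K+K\subseteq\inte K$ as such.
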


\begin{proof}
Under the assumptions made, in the light of Proposition \ref{pro:mincrCQ}
the condition $\CQ$ is satisfied. Thus, it is possible to invoke
Proposition \ref{pro:CQpen}, according to which $\bar x$ turns out to be
a w-eff. solution of problem $\VOPGl$, for any $\alpha\in
(1,\incr{G}{S}{\bar x})$ and  $\ell\ge\ell_f$.
This means that there exists $\delta>0$ such that
\begin{equation}    \label{eq:weffVOPGlempty}
 \left( f+\frac{\ell}{\alpha-1}\merit e\right)(\ball{\bar x}{\delta}\cap S)
 \cap [f(\bar x)-\inte K]=\varnothing.
\end{equation}
Take an arbitrary $v\in\Iang{S}{\bar x}\cap\Uball$. By reducing the value of $\delta>0$
in $(\ref{eq:weffVOPGlempty})$ if needed, one can assume that
$\bar x+tv\in S$, for all $t\in (0,\delta)$.
Therefore, from the relation in $(\ref{eq:weffVOPGlempty})$ it follows
\begin{equation}   \label{eq:weffpenal}
 \frac{f(\bar x+tv)-f(\bar x)}{t}+\frac{\ell\merit
 (\bar x+tv)e}{(\alpha-1)t} \in\Y\backslash(-\inte K),\quad
 \forall t\in (0,\delta).
\end{equation}

Let $x^*$ be an arbitrary element of $\Fusub\merit(\bar x)$. According
to the characterization of upper Fr\'echet subgradients mentioned in
Remark \ref{rem:varformFusub}, there exists a Fr\'echet differentiable function $\varsigma:
\X\longrightarrow\R$ such that $\varsigma(\bar x)=\merit(\bar x)=0$,
$\varsigma(x)\ge\merit(x)$ for every $x\in\X$, and $\Fder{\varsigma}{\bar x}
=x^*$. Therefore, one has
$$
  \varsigma(\bar x+tv)-\merit(\bar x+tv)\ge 0,\quad\forall
  t\in (0,+\infty),
$$
whence
\begin{equation}   \label{in:sigmameritposK}
  \frac{\ell[\varsigma(\bar x+tv)-\merit(\bar x+tv)]e}{(\alpha-1)t}
  \in K,\quad\forall t\in (0,+\infty).
\end{equation}
By combining $(\ref{eq:weffpenal})$ and $(\ref{in:sigmameritposK})$ and observing
that, for every $y\in\Y\backslash(-\inte K)$ it holds
$y+K\subseteq\Y\backslash(-\inte K)$, one obtains
$$
  \frac{f(\bar x+tv)-f(\bar x)}{t}+\frac{\ell\varsigma(\bar x+tv)e}{(\alpha-1)t}
  \in\Y\backslash(-\inte K),\quad
 \forall t\in (0,\delta).
$$
By passing to the limit as $t\downarrow 0$ in the last inclusion, while
taking into account that the cone $\Y\backslash(-\inte K)$ is closed and that
$f$ is directionally differentiable at $\bar x$, one achieves the
relation in $(\ref{notin:weffconpenal})$ for any $v\in\Iang{S}{\bar x}\cap\Uball$.
Since the mapping $v\mapsto \dder{f}{\bar x}{v}+\frac{\ell}{\alpha-1}
\langle x^*,v\rangle e$ is positively homogeneous and $\Y\backslash(-\inte K)$
is a cone, the validity of relation in $(\ref{notin:weffconpenal})$ can
be extended to the whole set $\Iang{S}{\bar x}$.
By arbitrariness of $x^*$,
this reasoning completes the proof.
\end{proof}

Among the hypotheses of Theorem \ref{thm:weconpenal}, the most involved
is (iii), so its deserves some comment. In the next remark, some elements
for discussion are provided in order to clarify the meaning of such
an assumption.

\begin{remark}
According to its definition, the merit function $\merit$ is nonnegative
and, since it is $\bar x\in G^{+1}(C)$ one has $\merit(\bar x)=0$, so
the hypothesis (iii) in Theorem \ref{thm:weconpenal} is about the nontriviality
of the Fr\'echet upper subdifferential at a (global) minimizer.
A systematic study of this tool of nonsmooth analysis (actually, not so
often employed as its lower counterpart) and related optimality
conditions for constrained minimization problems can be found in
\cite{Mord04,Mord06b}. In particular, it was shown that, for given a
function $\varphi:\X\longrightarrow\{\pm\infty\}$, which is defined
on an Asplund space and locally Lipschitz around $\bar x$,
the nonemptiness of $\Fusub\varphi(\bar x)$ is automatic if $\varphi$
is upper regular at $\bar x$, i.e. $\Fusub\varphi(\bar x)=\partial^+
\varphi(\bar x)$, where $\partial^+\varphi(\bar x)$ denotes the limiting
upper subdifferential of $\varphi$ at $\bar x$, defined through the
basic normals to the hypergraph of $\varphi$
(see \cite[Definition 1.78]{Mord06}). In such a circumstance,
it holds
$$
  \partial_{\rm Cl}\varphi(\bar x)=\wcl\Fusub\varphi(\bar x),
$$
where $\partial_{\rm Cl}\varphi(\bar x)$ denotes the Clarke generalized
gradient of $\varphi$ at $\bar x$ and $\wcl A$ marks the closure of a set $A$
with respect to the \weakstar topology (see, for more details,
\cite[Remark 4.5]{Mord06b} and \cite[Section 5.5.4]{Mord06b}).
Note that, as it is possible to check at once, $\merit$ is locally
Lipschitz around $\bar x$ whenever $G$ is Lipschitz continuous around
$\bar x$.
\end{remark}

By introducing proper convexity/concavity assumptions on the problem data
$S$, $f$ and $G$, it is possible to establish a first-order necessary
weak efficiency condition in a scalarized form.
To this aim, the next remark will be useful.

\begin{remark}    \label{rem:meritconvex}
(i) It is readily seen that, whenever $G:\X\rightrightarrows\Z$ is $C$-bounded
around a point $\bar x\in\X$, i.e. there exists $\delta>0$ such that
$G(x)\backslash C$ is bounded for every $x\in\ball{\bar x}{\delta}$,
then $\bar x\in\inte(\dom\merit)$.

(ii) whenever $G:\X\rightrightarrows\Z$ is $C$-concave on $\X$ the function
$\merit$ is convex on $\X$ (see, for instance, \cite[Remark 4.14]{Uder21}).
\end{remark}

\begin{theorem}[Weak efficiency condition via penalization under convexity]
With reference to a problem $\VOPG$, let $\bar x\in\Solv=S\cap G^{+1}(C)$
be a local w-eff. solution to $\VOPG$. Suppose that:

(i) $S$ is locally convex around $\bar x$;

(ii) $f$ is $K$-Lipschitz near $\bar x$, with constant $\ell_f$
and $e\in\inte K$, and is directionally differentiable at $\bar x$,
with $\dder{f}{\bar x}{\cdot}:\X\longrightarrow\Y$ being $K$-sublinear;

(iii) $G$ is l.s.c. in a neighbourhood of $\bar x$ and metrically
$C$-increasing around $\bar x$, relative to $S$;

(iv) $G$ is $C$-bounded around $\bar x$ and Hausdorff u.s.c. at $\bar x$;

(v) $G$ is $C$-concave in $\X$.

\noindent Then there exists $y^*\in\pdc{K}\backslash\{\nullv^*\}$
such that
\begin{equation}
  \langle y^*,\dder{f}{\bar x}{v}\rangle+
  \frac{\ell}{\alpha-1}\langle y^*,\dder{\merit}{\bar x}{v} e\rangle
  \ge 0,\quad\forall v\in\Iang{S}{\bar x}.
\end{equation}
\end{theorem}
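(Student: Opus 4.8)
The plan is to collapse the constrained problem onto the penalized problem $\VOPGl$, pass from images to directional derivatives, and then scalarize through a separation argument. First I would use Proposition~\ref{pro:mincrCQ} to turn the metric $C$-increase hypothesis (iii) into the constraint qualification $\CQ$ (with $\sigma=\alpha-1$) for an arbitrarily fixed $\alpha\in(1,\incr{G}{S}{\bar x})$, and then invoke Proposition~\ref{pro:CQpen} to conclude that $\bar x$ remains a local w-eff.\ solution of $\VOPGl$ for every $\ell\ge\ell_f$. Abbreviating $h=f+\frac{\ell}{\alpha-1}\merit\,e$ and recalling $h(\bar x)=f(\bar x)$ (since $\merit(\bar x)=0$), weak efficiency furnishes a $\delta>0$ with $h(\ball{\bar x}{\delta}\cap S)\cap[f(\bar x)-\inte K]=\varnothing$.

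Next I would extract the regularity of $\merit$ from the convexity data. By Remark~\ref{rem:meritconvex}(ii) the $C$-concavity in (v) makes $\merit$ convex on $\X$, and by Remark~\ref{rem:meritconvex}(i) the $C$-boundedness in (iv) gives $\bar x\in\inte(\dom\merit)$; moreover the Hausdorff upper semicontinuity in (iv), together with $\merit(x)\le\exc{G(x)}{G(\bar x)}+\merit(\bar x)=\exc{G(x)}{G(\bar x)}\to 0$, shows $\merit$ is bounded above near $\bar x$. A convex function bounded above on a neighbourhood is locally Lipschitz there, so $\merit$ is directionally differentiable at $\bar x$ with a finite sublinear derivative $\dder{\merit}{\bar x}{\cdot}$; hence $h$ is directionally differentiable with $\dder{h}{\bar x}{v}=\dder{f}{\bar x}{v}+\frac{\ell}{\alpha-1}\dder{\merit}{\bar x}{v}\,e$.

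Then I would pass to the limit and build the cone to be separated. For any $v\in\Iang{S}{\bar x}$ one has $\bar x+tv\in\ball{\bar x}{\delta}\cap S$ for all small $t>0$, so $t^{-1}[h(\bar x+tv)-h(\bar x)]\in\Y\backslash(-\inte K)$; letting $t\downarrow 0$ and using that $\Y\backslash(-\inte K)$ is a closed cone gives $\dder{h}{\bar x}{v}\notin-\inte K$. The structural point is that $\dder{f}{\bar x}{\cdot}$ is $K$-sublinear by (ii), while $\dder{\merit}{\bar x}{\cdot}\,e$ is $K$-sublinear by Remark~\ref{rem:Ksublin} (as $\dder{\merit}{\bar x}{\cdot}$ is sublinear and $e\in K$), so their positive combination $\dder{h}{\bar x}{\cdot}$ is $K$-sublinear. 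Since $S$ is locally convex by (i), $\Iang{S}{\bar x}$ is a convex cone, whence, by the property recalled in Section~\ref{Sect:2}, the set $Q=\{\dder{h}{\bar x}{v}:v\in\Iang{S}{\bar x}\}+K$ is a convex cone. Because $\inte K+K\subseteq\inte K$, the relation $\dder{h}{\bar x}{v}\notin-\inte K$ strengthens to $Q\cap(-\inte K)=\varnothing$.

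Finally I would separate. As $-\inte K$ is a nonempty open convex cone disjoint from the convex cone $Q$, Hahn--Banach provides $y^*\in\Y^*\backslash\{\nullv^*\}$ with $\langle y^*,q\rangle\ge 0\ge\langle y^*,w\rangle$ for all $q\in Q$ and $w\in-\inte K$; the homogeneity of $-\inte K$ and the density of $\inte K$ in $K$ turn the second inequality into $y^*\in\pdc{K}$, while the first, read at $q=\dder{h}{\bar x}{v}$, gives $\langle y^*,\dder{h}{\bar x}{v}\rangle\ge 0$ for every $v\in\Iang{S}{\bar x}$, i.e.\ the asserted inequality once $\langle y^*,\dder{\merit}{\bar x}{v}\,e\rangle=\dder{\merit}{\bar x}{v}\langle y^*,e\rangle$ is noted. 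I expect the main obstacle to lie in the second paragraph: guaranteeing that $\dder{\merit}{\bar x}{\cdot}$ is finite and sublinear in the possibly infinite-dimensional Banach setting, where convexity plus finiteness near $\bar x$ does not by itself give continuity---this is exactly where the Hausdorff u.s.c.\ half of (iv) is needed, to bound $\merit$ above on a neighbourhood of $\bar x$.
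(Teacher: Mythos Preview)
Your proposal is correct and follows essentially the same route as the paper: reduce to the penalized problem $\VOPGl$ via Propositions~\ref{pro:mincrCQ} and~\ref{pro:CQpen}, extract from hypotheses (iv)--(v) that $\merit$ is convex and directionally differentiable at $\bar x$ with sublinear derivative, pass to the limit along $\Iang{S}{\bar x}$ to get $\dder{h}{\bar x}{v}\notin-\inte K$, observe that $\dder{h}{\bar x}{\Iang{S}{\bar x}}+K$ is a convex cone disjoint from $-\inte K$, and then separate. The only minor variation is that the paper obtains directional differentiability of $\merit$ by combining l.s.c.\ (from (iii)) with u.s.c.\ (from the Hausdorff u.s.c.\ part of (iv)) to conclude continuity and then invoking \cite[Theorem~2.4.9]{Zali02}, whereas you argue that Hausdorff u.s.c.\ bounds $\merit$ above near $\bar x$, whence convexity yields local Lipschitz continuity directly; both routes are standard and lead to the same conclusion.
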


\begin{proof}
Let us start with observing that, by virtue of the hypotheses of
$C$-concavity, $\merit$ is convex on $\X$. Moreover, by hypothesis (iv),
it is $\bar x\in\inte(\dom\merit)$. Moreover, since $G$ is Hausdorff
u.s.c. at $\bar x$, function $\merit$ is also u.s.c. at $\bar x$
(see \cite[Lemma 2.3(ii)]{Uder19}).
So, remembering that it is also l.s.c. around $\bar x$ on the account of
hypothesis (iii), $\merit$ turns out to be continuous and hence directionally differentiable
at $\bar x$ (remember \cite[Theorem 2.4.9]{Zali02}).
From Remark \ref{rem:Ksublin}
it follows that the mapping $v\mapsto \merit(v)e$ is $K$-sublinear on $\X$.
As a sum of two $K$-sublinear mappings, $\dder{f}{\bar x}{\cdot}+
\frac{\ell}{\alpha-1}\dder{\merit}{\bar x}{\cdot} e$ is
$K$-sublinear on $\X$. On the other hand, since according to hypothesis (i)
$S$ is locally convex near $\bar x$, the cone $\Iang{S}{\bar x}$
is convex. It follows that the subset of $\Y$, given by
$$
  \dder{f}{\bar x}{\Iang{S}{\bar x}}+\frac{\ell}{\alpha-1}
  \dder{\merit}{\bar x}{\Iang{S}{\bar x}}e+K,
$$
is a convex cone as an image of a convex cone through a $K$-sublinear
mapping.

Since $\bar x$ is a local w-eff. solution of $\VOPG$, by arguing
as in the proof of Theorem \ref{thm:weconpenal}, it is possible
to show that
$$
  \left[\dder{f}{\bar x}{\Iang{S}{\bar x}}+\frac{\ell}{\alpha-1}
  \dder{\merit}{\bar x}{\Iang{S}{\bar x}}e\right]\cap (-\inte K)=\varnothing.
$$
This entails that it holds also
$$
  \left[\dder{f}{\bar x}{\Iang{S}{\bar x}}+\frac{\ell}{\alpha-1}
  \dder{\merit}{\bar x}{\Iang{S}{\bar x}}e+K\right]\cap (-\inte K)=\varnothing.
$$
In such a circumstance one can invoke the Eidelheit theorem
(see, for instance, \cite[Theorem 1.1.3]{Zali02}). It ensures the
existence of $y^*\in\Y^*\backslash\{\nullv^*\}$ and $\gamma\in\R$,
such that
\begin{equation}    \label{in:linsepdderfmer}
  \langle y^*, \dder{f}{\bar x}{v}+\frac{\ell}{\alpha-1}
  \dder{\merit}{\bar x}{v} e\rangle \ge\gamma\ge
  \langle y^*,y\rangle,
\end{equation}
$$
  \forall v\in\Iang{S}{\bar x},\ \forall y\in\cl(-\inte K)=-K.
$$
Since, in particular, it holds
$$
  \langle y^*, \dder{f}{\bar x}{\nullv}+\frac{\ell}{\alpha-1}
  \dder{\merit}{\bar x}{\nullv} e\rangle=0\ge\gamma\ge
  0=\langle y^*,\nullv\rangle,
$$
it follows that $\gamma$ must be $0$ and, by consequence, the second
inequality in $(\ref{in:linsepdderfmer})$ gives $y^*\in\pdc{K}$.
This completes the proof.
\end{proof}


\section{Weak efficiency conditions via tangential approximations} \label{Sect:4}

Throughout this section, as a first-order approximation of set-valued
mappings the notion of outer prederivative, introduced in \cite{Ioff81},
will be employed.

\begin{definition}[Outer prederivative]      \label{def:prederiv}
Let $F:\X\rightrightarrows\Z$ be a set-valued mapping between
Banach spaces and let $\bar x\in\dom F$. A p.h. set-valued mapping
$\Preder{F}{\bar x}{\cdot}:\X\rightrightarrows\Z$ is said to be an
{\it outer prederivative} of $F$ at $\bar x$ if for every $\epsilon>0$
there exists $\delta>0$ such that
$$
  F(x)\subseteq F(\bar x)+\Preder{F}{\bar x}{x-\bar x}+
  \epsilon\|x-\bar x\|\Uball,  \quad\forall x\in\ball{\bar x}{\delta}.
$$
\end{definition}

Extended discussions about this generalized differentiation concept
can be found, for instance, in \cite{Ioff81,GaGeMa16,Pang11}.

For subsequent considerations, it is worth observing that the notion
of outer prederivative collapses to the notion of Bouligand-derivative
(or B-derivative), when both $F$ and $\Preder{F}{\bar x}{\cdot}$ are
single-valued and $\Preder{F}{\bar x}{\cdot}$ is continuous.
More precisely, following \cite{Robi91}, a mapping $f:\X\longrightarrow\Z$
between Banach spaces is said to be $B$-differentiable at $\bar x\in\X$
if there exists a p.h. and continuous mapping $\Bder{f}{\bar x}{\cdot}:
\X\longrightarrow\Z$, called the B-derivative of $f$ at $\bar x$,
such that
$$
  \lim_{x\to\bar x}{f(x)-f(\bar x)-\Bder{f}{\bar x}{x-\bar x}\over
  \|x-\bar x\|}=0.
$$

By exploiting as a constraint qualification the metric $C$-increase
property of $G$, the following inner tangential approximation of $\Solv$
has been established in \cite{Uder22}, which is expressed
in terms of outer prederivatives and tangent cones. Its proof
was provided in a finite-dimensional setting, but a perusal
of the involved arguments reveals that it can be extended
without any modification to a Banach space setting.

\begin{proposition}(\cite[Theorem 3.1]{Uder22})
Let $G:\X\rightrightarrows\Z$, $S$ and $C$ as in problem
$\SVI$, and let $\bar x\in\Solv=S\cap G^{+1}(C)$. Suppose that:

(i) $G$ is l.s.c. in a neighbourhood of $\bar x$;

(ii) $G$ is metrically $C$-increasing around $\bar x$, relative
to $S$;

(iii) $G$ admits $\Preder{G}{\bar x}{\cdot}:\X\rightrightarrows\Z$
as an outer prederivative at $\bar x$.

\noindent Then it holds
\begin{equation}     \label{in:intanappIang}
   \Preder{G}{\bar x}{\cdot}^{+1}(C)\cap\WIang{S}{\bar x}
   \subseteq\Tang{\Solv}{\bar x}.
\end{equation}
If, in addition,

(iv) $\Preder{G}{\bar x}{\cdot}$ is Lipschitz,

\noindent then the stronger inclusion holds
\begin{equation}     \label{in:intanappTang}
   \Preder{G}{\bar x}{\cdot}^{+1}(C)\cap\Tang{S}{\bar x}
   \subseteq\Tang{\Solv}{\bar x}.
\end{equation}
\end{proposition}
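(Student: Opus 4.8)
The plan is to reduce both inclusions to a single distance estimate and then read off membership in the contingent cone directly from its definition. First I would use hypotheses (i)--(ii) together with Proposition \ref{pro:mincrCQ} to obtain the constraint qualification $\CQ$ with $\sigma=\alpha-1$ for any fixed $\alpha\in(1,\incr{G}{S}{\bar x})$; Lemma \ref{lem:locerbofun} then produces a radius $\delta'>0$ such that
$$
  \dist{x}{\Solv}\le\sigma^{-1}\merit(x),\qquad\forall x\in\ball{\bar x}{\delta'}\cap S.
$$
Granted this local error bound, everything rests on the following reduction: suppose $x_n=\bar x+t_nw_n\in S$ with $t_n\downarrow 0$ and $w_n\to v$, and suppose one can prove that $\merit(x_n)=o(t_n)$; then the error bound gives $\dist{x_n}{\Solv}=o(t_n)$, so choosing $y_n\in\Solv$ with $\|x_n-y_n\|\le\dist{x_n}{\Solv}+t_n^2$ and setting $v_n:=(y_n-\bar x)/t_n$ yields $\bar x+t_nv_n=y_n\in\Solv$ together with $\|v_n-w_n\|=\|y_n-x_n\|/t_n\to 0$, whence $v_n\to v$ and $v\in\Tang{\Solv}{\bar x}$. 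Thus in each case the only thing to establish is the infinitesimal estimate $\merit(x_n)=o(t_n)$, and the outer prederivative is precisely the device that delivers it.

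For the inclusion $(\ref{in:intanappIang})$ I would take $v\in\Preder{G}{\bar x}{\cdot}^{+1}(C)\cap\WIang{S}{\bar x}$; the weak feasible direction cone supplies $t_n\downarrow 0$ with $x_n:=\bar x+t_nv\in S$, i.e. $w_n\equiv v$. Fixing $\epsilon>0$, Definition \ref{def:prederiv} gives $\delta_\epsilon>0$ for which, as soon as $t_n\|v\|<\delta_\epsilon$,
$$
  G(x_n)\subseteq G(\bar x)+\Preder{G}{\bar x}{t_nv}+\epsilon t_n\|v\|\Uball
  =G(\bar x)+t_n\Preder{G}{\bar x}{v}+\epsilon t_n\|v\|\Uball.
$$
Since $\bar x\in G^{+1}(C)$ forces $G(\bar x)\subseteq C$, since $v\in\Preder{G}{\bar x}{\cdot}^{+1}(C)$ forces $\Preder{G}{\bar x}{v}\subseteq C$, and since $C$ is a convex cone (so $C+C=C$ and $t_nC=C$), the right-hand side lies in $C+\epsilon t_n\|v\|\Uball$; consequently $\merit(x_n)=\exc{G(x_n)}{C}\le\epsilon t_n\|v\|$ for all large $n$. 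Letting $\epsilon\downarrow 0$ gives $\merit(x_n)=o(t_n)$, and the reduction closes this case.

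For the stronger inclusion $(\ref{in:intanappTang})$ I would take $v\in\Preder{G}{\bar x}{\cdot}^{+1}(C)\cap\Tang{S}{\bar x}$. Now the contingent cone only provides $t_n\downarrow 0$ and $w_n\to v$ with $x_n:=\bar x+t_nw_n\in S$, so the prederivative must be evaluated at the \emph{moving} directions $w_n$ rather than at $v$ itself. This is exactly where hypothesis (iv) enters: if $\Preder{G}{\bar x}{\cdot}$ is Lipschitz with constant $L$, then $\haus{\Preder{G}{\bar x}{w_n}}{\Preder{G}{\bar x}{v}}\le L\|w_n-v\|$, so every element of $\Preder{G}{\bar x}{w_n}$ lies within $L\|w_n-v\|$ of $\Preder{G}{\bar x}{v}\subseteq C$. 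Feeding this into the prederivative inclusion $G(x_n)\subseteq G(\bar x)+t_n\Preder{G}{\bar x}{w_n}+\epsilon t_n\|w_n\|\Uball$ and again absorbing $G(\bar x)\subseteq C$ and $t_nC=C$, a routine excess estimate gives
$$
  \merit(x_n)\le t_n\bigl(L\|w_n-v\|+\epsilon\|w_n\|\bigr),
$$
whence $\limsup_n\merit(x_n)/t_n\le\epsilon\|v\|$ (using $w_n\to v$); letting $\epsilon\downarrow 0$ again yields $\merit(x_n)=o(t_n)$, and the reduction finishes the proof.

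The bookkeeping part --- the diagonal passage $\epsilon\downarrow 0$ performed against $x_n\to\bar x$ --- is harmless, since for each fixed $\epsilon$ the estimate holds for all sufficiently large $n$. The genuine obstacle, and the reason the two inclusions require different domain cones, is the control of $\Preder{G}{\bar x}{w_n}$ along the varying directions $w_n$ in the contingent setting: without Lipschitz continuity one cannot compare $\Preder{G}{\bar x}{w_n}$ with $\Preder{G}{\bar x}{v}\subseteq C$, and $\merit(x_n)$ need no longer be $o(t_n)$. This is precisely why the weak-feasible-direction version $(\ref{in:intanappIang})$ survives under the bare hypotheses (i)--(iii), whereas upgrading the domain cone to $\Tang{S}{\bar x}$ in $(\ref{in:intanappTang})$ demands hypothesis (iv).
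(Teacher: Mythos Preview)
Your argument is correct. The paper itself does not reproduce a proof of this proposition: it is quoted as \cite[Theorem 3.1]{Uder22} with the remark that the finite-dimensional argument there carries over verbatim to Banach spaces. Your route---obtain the local error bound $\dist{x}{\Solv}\le\sigma^{-1}\merit(x)$ on $\ball{\bar x}{\delta'}\cap S$ via Proposition \ref{pro:mincrCQ} and Lemma \ref{lem:locerbofun}, then use the outer prederivative (together with $G(\bar x)\subseteq C$, $\Preder{G}{\bar x}{v}\subseteq C$ and the cone property of $C$) to force $\merit(\bar x+t_nw_n)=o(t_n)$, and finally project back to $\Solv$---is precisely the error-bound/penalty mechanism underlying the cited result, so your proof and the original one are essentially the same in spirit and structure. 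The distinction you draw between the two inclusions is also the right one: with $w_n\equiv v$ no continuity of $\Preder{G}{\bar x}{\cdot}$ is needed, whereas upgrading to $\Tang{S}{\bar x}$ forces evaluation at moving directions $w_n\to v$, and the Lipschitz hypothesis (iv) is exactly what transfers $\Preder{G}{\bar x}{w_n}$ into a $C$-neighbourhood of controlled radius.
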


Following the well-known Euler-Lagrange scheme for deriving necessary optimality
conditions in the presence of constraints, from the above
tangential approximation of the feasible region of $\VOPG$, it is
possible to obtain the below first-order weak efficiency condition.

\begin{theorem}     \label{thm:nwec1}
With reference to a problem $\VOPG$, let $\bar x\in\Solv$ be a local
w-eff. solution. Suppose that:

(i) $f$ is $B$-differentiable at $\bar x$;

(ii) $G$ is l.s.c. in a neighbourhood of $\bar x$ and is
metrically $C$-increasing around $\bar x$, relative to $S$;

(iii) $G$ admits $\Preder{G}{\bar x}{\cdot}:\X\rightrightarrows\Z$
as an outer prederivative at $\bar x$.

\noindent Then,
\begin{equation}    \label{in:nweffcond1}
   \Bder{f}{\bar x}{v}\not\in -\inte K,\quad\forall
   v\in \Preder{G}{\bar x}{\cdot}^{+1}(C)\cap\WIang{S}{\bar x}.
\end{equation}
If, in addition,

(iv) $\Bder{f}{\bar x}{\cdot}:\X\longrightarrow\Y$ is $K$-convexlike
on the set $\Preder{G}{\bar x}{\cdot}^{+1}(C)\cap\Tang{S}{\bar x}$;

(v) $\Preder{G}{\bar x}{\nullv}\subseteq C$;

(vi) $\Preder{G}{\bar x}{\cdot}$ is Lipschitz,

\noindent there exists $y^*\in\pdc{K}\backslash\{\nullv^*\}$ such that
\begin{equation}    \label{in:thmscalariz}
   y^*\circ\Bder{f}{\bar x}{v}\ge 0,\quad\forall
   v\in\Preder{G}{\bar x}{\cdot}^{+1}(C)\cap\Tang{S}{\bar x}.
\end{equation}
In particular, whenever $f$ is Fr\'echet differentiable at $\bar x$,
it results in
\begin{equation}      \label{in:thminclus}
   -\Fder{f}{\bar x}^*y^*\in\ndc{[\Preder{G}{\bar x}{\cdot}^{+1}(C)\cap\Tang{S}{\bar x}]}.
\end{equation}
\end{theorem}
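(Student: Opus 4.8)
The plan is to read off the tangential approximation of $\Solv$ recalled just above, to couple it with the definition of local weak efficiency through the contingent cone, and finally to scalarize the resulting pointwise exclusion by an Eidelheit separation, exactly as in the preceding convexity theorem. First I would establish $(\ref{in:nweffcond1})$. Fix $v\in\Preder{G}{\bar x}{\cdot}^{+1}(C)\cap\WIang{S}{\bar x}$; hypotheses (ii) and (iii) are precisely what the cited Proposition requires for the inclusion $(\ref{in:intanappIang})$, so $v\in\Tang{\Solv}{\bar x}$, and there exist $v_n\to v$ and $t_n\downarrow 0$ with $\bar x+t_nv_n\in\Solv$. Local weak efficiency yields, for large $n$, that $f(\bar x+t_nv_n)-f(\bar x)\notin-\inte K$, and since $-\inte K$ is a cone, division by $t_n>0$ keeps the difference quotients in the closed set $\Y\backslash(-\inte K)$. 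By the $B$-differentiability of $f$, together with the positive homogeneity and continuity of $\Bder{f}{\bar x}{\cdot}$, these quotients converge to $\Bder{f}{\bar x}{v}$; closedness of $\Y\backslash(-\inte K)$ then gives $\Bder{f}{\bar x}{v}\notin-\inte K$. The degenerate case $v=\nullv$ is immediate, because $\Bder{f}{\bar x}{\nullv}=\nullv\notin-\inte K$.

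Under the extra hypothesis (vi), the very same argument runs with $(\ref{in:intanappTang})$ replacing $(\ref{in:intanappIang})$, so the exclusion $\Bder{f}{\bar x}{v}\notin-\inte K$ now holds for every $v\in\Preder{G}{\bar x}{\cdot}^{+1}(C)\cap\Tang{S}{\bar x}$. Writing $A=\Bder{f}{\bar x}{\Preder{G}{\bar x}{\cdot}^{+1}(C)\cap\Tang{S}{\bar x}}$, I would check that $(A+K)\cap(-\inte K)=\varnothing$: this follows from $A\cap(-\inte K)=\varnothing$ and the elementary fact $\inte K+K\subseteq\inte K$ for a convex cone. Hypothesis (iv) says exactly that $A+K$ is convex, so $A+K$ and the open convex cone $-\inte K$ are disjoint convex sets, and the Eidelheit theorem (invoked as in the preceding proof) supplies $y^*\in\Y^*\backslash\{\nullv^*\}$ and $\gamma\in\R$ with $\langle y^*,w\rangle\ge\gamma\ge\langle y^*,u\rangle$ for all $w\in A+K$ and all $u\in\cl(-\inte K)=-K$.

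To extract the sign information I would evaluate at $v=\nullv$. Hypothesis (v), $\Preder{G}{\bar x}{\nullv}\subseteq C$, gives $\nullv\in\Preder{G}{\bar x}{\cdot}^{+1}(C)\cap\Tang{S}{\bar x}$, and $\Bder{f}{\bar x}{\nullv}=\nullv$; testing the separation at $\nullv$ (with the $K$-summand equal to $\nullv$, and then with $u=\nullv\in-K$) forces $\gamma=0$. The right inequality, as $u$ ranges over $-K$, then yields $y^*\in\pdc{K}$, while the left inequality with the $K$-summand set to $\nullv$ gives $\langle y^*,\Bder{f}{\bar x}{v}\rangle\ge 0$ on $\Preder{G}{\bar x}{\cdot}^{+1}(C)\cap\Tang{S}{\bar x}$, which is $(\ref{in:thmscalariz})$. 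Finally, if $f$ is Fr\'echet differentiable the $B$-derivative reduces to the linear operator $\Fder{f}{\bar x}$, so $\langle y^*,\Fder{f}{\bar x}v\rangle=\langle\Fder{f}{\bar x}^*y^*,v\rangle\ge 0$; rewriting this as $\langle-\Fder{f}{\bar x}^*y^*,v\rangle\le 0$ over the cone $\Preder{G}{\bar x}{\cdot}^{+1}(C)\cap\Tang{S}{\bar x}$ is precisely the membership $(\ref{in:thminclus})$ in the negative dual cone.

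The step I expect to be most delicate is the limit passage in the first part: one must control the remainder of the $B$-differentiability estimate along the varying sequence $\bar x+t_nv_n$, which is where the continuity of $\Bder{f}{\bar x}{\cdot}$ and the boundedness of $\|v_n\|$ are genuinely used. In the scalarization the only real content is the convexity furnished by (iv) and the normalization $\gamma=0$ secured by (v); without the latter the separating functional could be nontrivial yet carry no useful information on $K$, so verifying $\nullv$ lies in the relevant cone is the point that cannot be skipped.
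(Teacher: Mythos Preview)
Your proposal is correct and follows essentially the same route as the paper: you invoke the tangential inclusions $(\ref{in:intanappIang})$ and $(\ref{in:intanappTang})$ under hypotheses (ii)--(iii) and (vi), pass to the limit along contingent sequences using $B$-differentiability and continuity of $\Bder{f}{\bar x}{\cdot}$, and then apply the Eidelheit separation to the convex set $A+K$ versus $-\inte K$, normalizing $\gamma=0$ via hypothesis (v). Your added remarks on the remainder control and on why $\nullv$ must lie in the cone make explicit precisely the points the paper handles more tersely.
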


\begin{proof}
Upon hypotheses (ii) and (iii), the inner tangential approximation
given by $(\ref{in:intanappIang})$ can be employed.
So, take an arbitrary $v\in\Preder{G}{\bar x}{\cdot}^{+1}(C)
\cap\WIang{S}{\bar x}$. As it is also $v\in\Tang{\Solv}{\bar x}$, there exist sequences
$(v_n)$, with $v_n\to v$, and $(t_n)$, with $t_n\downarrow 0$, as $n\to\infty$,
such that $\bar x+t_nv_n\in\Solv$. Since $\bar x$ is a local w-eff. solution to
$\VOPG$, by recalling hypothesis (i), one obtains
$$
  \Bder{f}{\bar x}{v_n}+\frac{o(\bar x;t_nv_n)}{t_n}=\frac{f(\bar x+t_nv_n)
  -f(\bar x)}{t_n}\in\Y\backslash(-\inte K).
$$
By passing to the limit as $n\to\infty$, taking into account that
$\Y\backslash(-\inte K)$ is a closed set and the mapping $\Bder{f}{\bar x}{\cdot}$
is continuous, one achieves the inequality in $(\ref{in:nweffcond1})$.

Upon the hypothesis (iv), the set $\Bder{f}{\bar x}{\Preder{G}{\bar x}{\cdot}^{+1}(C)
\cap\Tang{S}{\bar x}}+K$ is a convex subset of $\Y$. By arguing as in the
first part of the proof,
one can show that
$$
   \Bder{f}{\bar x}{v}\not\in -\inte K,\quad\forall
   v\in \Preder{G}{\bar x}{\cdot}^{+1}(C)\cap\Tang{S}{\bar x},
$$
which amounts to say
$$
  \left[\Bder{f}{\bar x}{\Preder{G}{\bar x}{\cdot}^{+1}(C)\cap\Tang{S}{\bar x}}
  \right]\cap(-\inte K)=\varnothing.
$$
Notice that this implies
$$
  [\Bder{f}{\bar x}{\Preder{G}{\bar x}{\cdot}^{+1}(C)\cap\Tang{S}{\bar x}}+K]
  \cap(-\inte K)=\varnothing.
$$
By the Eidelheit theorem
there exists $y^*\in\Y^*\backslash\{\nullv^*\}$ and $\gamma\in\R$ such that
\begin{equation}    \label{in:Eidsepar}
   \langle y^*,y\rangle\le\gamma\le\langle y^*,\Bder{f}{\bar x}{v}\rangle,
\end{equation}
$$
  \quad\forall y\in\cl(-\inte K)=-K,\quad\forall v\in
  \Preder{G}{\bar x}{\cdot}^{+1}(C)\cap\Tang{S}{\bar x}.
$$
Since owing to hypothesis (v) it is $\nullv\in -K\cap
\Preder{G}{\bar x}{\cdot}^{+1}(C)\cap\Tang{S}{\bar x}$, according
to the inequalities in $(\ref{in:Eidsepar})$ it must be $\gamma=0$.
Consequently, the first inequality in  $(\ref{in:Eidsepar})$ gives $y^*
\in\pdc{\Y}$, whereas the second one yields $(\ref{in:thmscalariz})$.
In the case of Fr\'echet differentiability of $f$ at $\bar x$, inclusion
$(\ref{in:thminclus})$ is a direct consequence of inequality
$(\ref{in:thmscalariz})$. The proof is complete.
\end{proof}

\begin{remark}
The property of a mapping to be $K$-convexlike on a set $D$ depends essentially
on the set $D$. Notice that, if $D_1\subseteq D$,  a mapping $K$-convexlike on $D$
may fail to be $K$-convexlike on $D_1$. Thus, hypothesis (iv) links crucially
the behaviour of $\Bder{f}{\bar x}{\cdot}$ with the geometry of the set
$\Preder{G}{\bar x}{\cdot}^{+1}(C)\cap\Tang{S}{\bar x}$.
On the other hand, the $K$-sublinearity property is stable under convex
restrictions, in the sense that if $h:\X\longrightarrow\Y$ is $K$-sublinear
on a set $D\subseteq\X$, it still remains so on each convex subset $D_1\subseteq D$.
This fact makes it convenient to consider the following replacement
of hypothesis (iv), with separate (but stricter) requirements on
the involved problem data:

\vskip.25cm

(iv') $\Bder{f}{\bar x}{\cdot}$ is $K$-sublinear on $\X$, $\Preder{G}{\bar x}{\cdot}$
is $C$-superlinear on $\X$, and $S$ is locally convex near $\bar x$.

\vskip.25cm

\noindent In such a circumstance, $\Tang{S}{\bar x}$ is a convex cone as well as
$\Preder{G}{\bar x}{\cdot}^{+1}(C)$. Since the set
$\Bder{f}{\bar x}{\Preder{G}{\bar x}{\cdot}^{+1}(C)\cap\Tang{S}{\bar x}}+K$
is convex, $\Bder{f}{\bar x}{\cdot}$ turns out to be $K$-convexlike
on the set $\Preder{G}{\bar x}{\cdot}^{+1}(C)\cap\Tang{S}{\bar x}$.
\end{remark}

The next result provides a refinement of Theorem \ref{thm:nwec1}, which
can be established, under proper qualification conditions, by replacing
general first-order approximations of the data with the local convexity
of $S$ and linear approximations of $f$ and $G$.

\begin{theorem}[Multiplier rule via fans]   \label{thm:colinapprox}
Let $\bar x\in\Solv$ be a local w-eff. solution to problem $\VOPG$.
Suppose that hypotheses (i)-(iii) are satisfied and, in addition,
that:

(iv) $S$ is locally convex near $\bar x$;

(v) $f$ is Fr\'echet differentiable at $\bar x$;

(vi) $\Preder{G}{\bar x}{\cdot}$ is a fan finitely generated by
${\mathcal G}=\conv\{\Lambda_1,\dots,\Lambda_p\}$;

(vii) the further qualification condition holds
\begin{equation}     \label{ne:conecq}
   \left(\bigcap_{i=1}^p\inte\Lambda_i^{-1}(C)\right)\cap\inte
   \Tang{S}{\bar x}\ne\varnothing.
\end{equation}
\noindent Then there exist $y^*\in\pdc{K}\backslash\{\nullv^*\}$ and,
for each $i=1,\dots, p$, $x^*_i\in\X^*$ and sequences $(z^*_{i,n})_n$
in $\Z^*$, with $z^*_{i,n}\in\dcone{C}$ and $\Lambda_i^*z^*_{i,n}
\to x_i^*$, such that
\begin{equation}      \label{in:multrule}
    \nullv^*\in\Fder{f}{\bar x}^*y^*+\sum_{i=1}^px_i^*
    +\Ncone{S}{\bar x}.
\end{equation}
\end{theorem}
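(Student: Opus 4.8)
The plan is to reduce the statement to the scalarized inclusion $(\ref{in:thminclus})$ supplied by the last part of Theorem \ref{thm:nwec1}, and then to evaluate the negative dual cone occurring there by exploiting the fan structure of $\Preder{G}{\bar x}{\cdot}$ together with the dual-cone calculus gathered in Remark \ref{rem:dconepro}.

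First I would verify that the extra hypotheses (iv)--(vi) of Theorem \ref{thm:nwec1} follow from the present ones. By (vi) the set $\mathcal G=\conv\{\Lambda_1,\dots,\Lambda_p\}$ is bounded, so Remark \ref{rem:Linfan} makes the fan $\Preder{G}{\bar x}{\cdot}=H_{\mathcal G}$ Lipschitz; moreover $\Preder{G}{\bar x}{\nullv}=\{\Lambda\nullv:\ \Lambda\in\mathcal G\}=\{\nullv\}\subseteq C$. By (v), $f$ is $B$-differentiable with $\Bder{f}{\bar x}{\cdot}=\Fder{f}{\bar x}$ linear, hence $K$-sublinear on $\X$. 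The crucial auxiliary identity is
\[
  \Preder{G}{\bar x}{\cdot}^{+1}(C)=\bigcap_{i=1}^p\Lambda_i^{-1}(C),
\]
since for fixed $v$ one has $H_{\mathcal G}(v)=\conv\{\Lambda_1 v,\dots,\Lambda_p v\}$, which is contained in the convex cone $C$ exactly when each $\Lambda_i v\in C$. Thus $\Preder{G}{\bar x}{\cdot}^{+1}(C)$ is a closed convex cone, and so is its intersection with $\Tang{S}{\bar x}$ (a closed convex cone by (iv)); since a linear map is $K$-convexlike on any convex set, hypothesis (iv) of Theorem \ref{thm:nwec1} is met. Invoking its final conclusion yields $y^*\in\pdc{K}\backslash\{\nullv^*\}$ with
\[
  -\Fder{f}{\bar x}^*y^*\in\ndc{\left[\Preder{G}{\bar x}{\cdot}^{+1}(C)\cap\Tang{S}{\bar x}\right]}.
\]

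Next I would compute this negative dual cone. Setting $P_i=\Lambda_i^{-1}(C)$ for $i=1,\dots,p$ and $P_{p+1}=\Tang{S}{\bar x}$, all closed convex cones, the qualification condition $(\ref{ne:conecq})$ furnishes a common point of their interiors. Such a point is interior to every partial intersection, so the two-cone sum rule in Remark \ref{rem:dconepro}(ii) applies at each stage of an induction and gives
\[
  \ndc{\left[\bigcap_{i=1}^p\Lambda_i^{-1}(C)\cap\Tang{S}{\bar x}\right]}
  =\sum_{i=1}^p\ndc{[\Lambda_i^{-1}(C)]}+\ndc{\Tang{S}{\bar x}}.
\]
Here $\ndc{\Tang{S}{\bar x}}=\Ncone{S}{\bar x}$ by Remark \ref{rem:dconepro}(i) (local convexity of $S$), while $(\ref{eq:dconeadj})$ gives $\ndc{[\Lambda_i^{-1}(C)]}=\cl\Lambda_i^*(\ndc{C})$.

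Finally I would unpack the membership: there are $x_i^*\in\cl\Lambda_i^*(\ndc{C})$ and $n^*\in\Ncone{S}{\bar x}$ with $-\Fder{f}{\bar x}^*y^*=\sum_{i=1}^p x_i^*+n^*$, which rearranges to $(\ref{in:multrule})$; and since $x_i^*$ lies in the closure of $\Lambda_i^*(\ndc{C})$, there is a sequence $z_{i,n}^*\in\ndc{C}=\dcone{C}$ with $\Lambda_i^*z_{i,n}^*\to x_i^*$. The only genuinely delicate point is the presence of these closures: the adjoint image $\Lambda_i^*(\ndc{C})$ need not be closed, which is precisely why the multipliers must be described through convergent sequences rather than by exact representations $x_i^*=\Lambda_i^*z_i^*$; this could be dispensed with under a range condition such as $\Lambda_i\X\supseteq C$, as noted at the end of Remark \ref{rem:dconepro}(ii), but no such assumption is imposed here.
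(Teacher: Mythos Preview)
Your proposal is correct and follows essentially the same route as the paper: verify that hypotheses (iv)--(vi) of Theorem \ref{thm:nwec1} hold (Lipschitz fan via Remark \ref{rem:Linfan}, $\Preder{G}{\bar x}{\nullv}=\{\nullv\}\subseteq C$, linearity of $\Fder{f}{\bar x}$ giving $K$-convexlikeness on the convex cone $\bigcap_i\Lambda_i^{-1}(C)\cap\Tang{S}{\bar x}$), invoke $(\ref{in:thminclus})$, and then split the negative dual cone via Remark \ref{rem:dconepro} under the qualification $(\ref{ne:conecq})$. Your treatment is in fact slightly more explicit than the paper's (the inductive justification of the multi-cone sum rule and the argument for $H_{\mathcal G}^{+1}(C)=\bigcap_i\Lambda_i^{-1}(C)$), but the approach is the same.
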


\begin{proof}
Observe that by hypothesis (v), it is $\Preder{G}{\bar x}{\nullv}=\{\nullv\}
\subseteq C$, so hypothesis (v) of Theorem \ref{thm:nwec1} is fulfilled.
As recalled in Remark \ref{rem:Linfan}, since the bundle generating $\Preder{G}{\bar x}{\cdot}$
is bounded according to hypothesis (vi), $\Preder{G}{\bar x}{\cdot}$ is Lipschitz.
Moreover, it is readily seen that if $\Preder{G}{\bar x}{\cdot}$
is generated by ${\mathcal G}=\conv\{\Lambda_1,\dots,\Lambda_p\}$, it holds
$$
  \Preder{G}{\bar x}{\cdot}^{+1}(C)=\bigcap_{i=1}^p\Lambda_i^{-1}(C).
$$
Since each element $\Lambda_i^{-1}(C)$ in the above intersection is
a convex cone as well as $\Tang{S}{\bar x}$ by hypothesis (iv), the set
$\Preder{G}{\bar x}{\cdot}^{+1}(C)\cap\Tang{S}{\bar x}$ turns out to
be a convex cone.
By hypothesis (v) it is $\Bder{f}{\bar x}{\cdot}=\Fder{f}{\bar x}$, so,
as a linear mapping it is $K$-convexlike on $\Preder{G}{\bar x}{\cdot}^{+1}(C)
\cap\Tang{S}{\bar x}$.
One is therefore in a position to apply Theorem \ref{thm:nwec1}.
Thus, there exists $y^*\in\pdc{K}\backslash\{\nullv^*\}$ such that
$$
  -\Fder{f}{\bar x}^*y^*\in\dcone{\left[\bigcap_{i=1}^p\Lambda_i^{-1}(C)
    \cap\Tang{S}{\bar x}\right]}.
$$
By virtue of the qualification condition in hypothesis (vii),
on account of the relations discussed in Remark \ref{rem:dconepro},
the last inclusion implies
$$
   -\Fder{f}{\bar x}^*y^*\in\sum_{i=1}^{p}
   \dcone{\left(\Lambda_i^{-1}(C)\right)}+\dcone{\Tang{S}{\bar x}}
   =\sum_{i=1}^{p}\cl\Lambda_i^*(\dcone{C})+\Ncone{S}{\bar x}.
$$
This means that there must exist $x_i^*\in\cl\Lambda_i^*(\dcone{C})$, for every
$i=1,\dots,p$, such that
$$
  -\Fder{f}{\bar x}^*y^*\in\sum_{i=1}^px_i^*+\Ncone{S}{\bar x},
$$
which immediately entails the existence of such sequences $(z^*_{i,n})_n$
in $\Z^*$ as in asserted in the thesis, thereby completing the proof.
\end{proof}

\begin{remark}   \label{rem:Slater}
It is worth noting that, whenever $\inte C\ne\varnothing$ and $\bar x\in\inte S$,
the qualification condition in $(\ref{ne:conecq})$ is satisfied
provided that the following Slater-type condition holds:
\begin{equation}    \label{in:Slater}
 \exists x_0\in\X:\ \Lambda_ix_0\in\inte C,\quad\forall i=1,\dots,p.
\end{equation}
\end{remark}

As one expects, the formulation of the multiplier rule expressed by $(\ref{in:multrule})$
simplifies if specialized to a finite-dimensional space setting.
This is done in the next result, where the adjoint operation (which can
be viewed as a matrix transposition) is now denoted by the symbol $\top$.

\begin{corollary}[Weak Pareto efficiency condition in finite-dimensional spaces]
Let $\bar x\in\Solv$ be a local w-eff. solution to problem $\VOPG$,
with $\X=\R^n$, $\Y=\R^m$, $\Z=\R^p$, $K=\R^m_+$ and $\inte C\ne\varnothing$.
Suppose that hypotheses (i)-(vi) are satisfied, whereas (vii) is replaced
by condition $(\ref{in:Slater})$.
Then there exist $v\in\R^m_+\backslash\{\nullv\}$
and $c_i\in\dcone{C}$, $i=1,\dots,p$, such that
\begin{equation}
    \nullv\in\Fder{f}{\bar x}^\top v+\sum_{i=1}^p\Lambda_i^\top c_i
    +\Ncone{S}{\bar x}.
\end{equation}
If, in particular, $\bar x\in\inte S$, it results in
$$
   \nullv=\Fder{f}{\bar x}^\top v+\sum_{i=1}^p\Lambda_i^\top c_i.
$$
\end{corollary}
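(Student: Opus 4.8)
The plan is to read the corollary off as the finite-dimensional transcription of Theorem \ref{thm:colinapprox}, the only genuine work lying in the identification of the dual objects in $\R^n,\R^m,\R^p$ and in the removal of a closure operation. First I would check that, under hypotheses (i)--(vi), the Slater-type condition $(\ref{in:Slater})$ delivers the qualification $(\ref{ne:conecq})$ demanded by that theorem. Since $\inte C\ne\varnothing$ one has $\rinte C=\inte C$, so any $x_0$ as in $(\ref{in:Slater})$ satisfies $\Lambda_i x_0\in\inte C$, i.e. $x_0\in\bigcap_{i=1}^p\inte\Lambda_i^{-1}(C)$; as recorded in Remark \ref{rem:Slater}, when $\bar x\in\inte S$ (so that $\Tang{S}{\bar x}=\R^n$ and $\inte\Tang{S}{\bar x}=\R^n$) this at once yields $(\ref{ne:conecq})$. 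Invoking Theorem \ref{thm:colinapprox} then produces $y^*\in\pdc{K}\backslash\{\nullv^*\}$, elements $x_i^*\in\X^*$ and sequences $z_{i,n}^*\in\dcone{C}$ with $\Lambda_i^*z_{i,n}^*\to x_i^*$, for which $\nullv^*\in\Fder{f}{\bar x}^*y^*+\sum_{i=1}^px_i^*+\Ncone{S}{\bar x}$.

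The second step is to interpret these abstract dual objects in the Euclidean setting. Because $K=\R^m_+$ is self-dual, $\pdc{K}=\R^m_+$, so $y^*$ may be renamed $v\in\R^m_+\backslash\{\nullv\}$; moreover every adjoint becomes the corresponding transpose, whence $\Fder{f}{\bar x}^*=\Fder{f}{\bar x}^\top$ and $\Lambda_i^*=\Lambda_i^\top$. The decisive point is to eliminate the limiting sequences, i.e. the closure implicit in $x_i^*\in\cl\Lambda_i^*(\dcone{C})$ coming from formula $(\ref{eq:dconeadj})$. Here the Slater condition does its essential work: the point $x_0$ above lies in $\Lambda_i^{-1}(\rinte C)$, so $\Lambda_i^{-1}(\rinte C)\ne\varnothing$ for each $i$, and by the note closing Remark \ref{rem:dconepro}(ii) the closure in $(\ref{eq:dconeadj})$ may be dropped, giving $\dcone{[\Lambda_i^{-1}(C)]}=\Lambda_i^\top(\dcone{C})$. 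Consequently each $x_i^*$ is a genuine $\Lambda_i^\top c_i$ with $c_i\in\dcone{C}$, and the sequences $(z_{i,n}^*)_n$ become superfluous.

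Substituting these identifications into the inclusion furnished by Theorem \ref{thm:colinapprox} then yields the first asserted relation $\nullv\in\Fder{f}{\bar x}^\top v+\sum_{i=1}^p\Lambda_i^\top c_i+\Ncone{S}{\bar x}$. For the final assertion I would simply observe that $\bar x\in\inte S$ forces $\Tang{S}{\bar x}=\R^n$, hence $\Ncone{S}{\bar x}=\dcone{\Tang{S}{\bar x}}=\{\nullv\}$, which collapses the inclusion to the stated equality. The step I expect to demand the most care is the closure removal: in finite dimensions $\Lambda_i^\top(\dcone{C})$ need not be closed for a general linear map, and it is precisely the nonemptiness of $\Lambda_i^{-1}(\rinte C)$ guaranteed by $(\ref{in:Slater})$ that certifies $\dcone{[\Lambda_i^{-1}(C)]}=\Lambda_i^\top(\dcone{C})$ exactly. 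A secondary point to watch is that, for the first (non-interior) conclusion, one must ensure the Slater point also lies in $\inte\Tang{S}{\bar x}$, so that $(\ref{ne:conecq})$ genuinely holds and Theorem \ref{thm:colinapprox} applies.
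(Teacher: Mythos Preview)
Your proposal is correct and follows essentially the same route as the paper: verify that the Slater condition $(\ref{in:Slater})$ yields the qualification $(\ref{ne:conecq})$, invoke Theorem \ref{thm:colinapprox}, and then use $\Lambda_i^{-1}(\rinte C)\ne\varnothing$ together with Remark \ref{rem:dconepro}(ii) to drop the closure in $(\ref{eq:dconeadj})$, so that each $x_i^*=\Lambda_i^\top c_i$ for some $c_i\in\dcone{C}$. Your closing caveat---that for the first (non-interior) conclusion one must still ensure the Slater point lies in $\inte\Tang{S}{\bar x}$---is well placed; the paper's own proof simply asserts that $(\ref{in:Slater})$ implies hypothesis (vii) without elaborating on this point.
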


\begin{proof}
Under condition $(\ref{in:Slater})$, also the hypothesis (vii) of
Theorem \ref{thm:colinapprox} is fulfilled. So, in applying
this result, it suffices to observe that
$\Lambda_ix_0\in\inte C$ implies $x_0\in\Lambda_i^{-1}(\inte C)=
\Lambda_i^{-1}(\rinte C)\ne\varnothing$. Thus, by taking into account
what noted in Remark \ref{rem:dconepro}(ii), it is true that
$$
  \dcone{[\Lambda^{-1}_i(C)]}=\Lambda_i^\top(\dcone{C}),
  \quad\forall i=1,\dots,p.
$$
This completes the proof.
\end{proof}

\vskip1cm

\end{document}